\newtheorem{theorem}{Theorem}
\newtheorem{proposition}{Proposition}
\newtheorem{lemma}{Lemma}
\newtheorem{definition}{Definition}
\newtheorem{claim}{Claim}
\newcommand{\e}{\mathrm{e}}
\newcommand{\reals}{\mathbb{R}}
\newcommand{\conv}{\mathrm{conv}}
\newcommand{\sign}{\mathrm{sign}}
\newcommand{\dist}{\mathrm{dist}}
\newcommand{\prog}{\mathrm{prog}}
\newcommand{\GD}{\mathrm{GD}}
\newcommand{\INGD}{\mathrm{INGD}}
\newcommand{\E}{\mathbb{E}}
\newcommand{\NN}{\mathbb{N}}
\newcommand{\Acal}{\mathcal{A}}
\newcommand{\Dcal}{\mathcal{D}}
\newcommand{\Fcal}{\mathcal{F}}
\newcommand{\norm}[1]{\left\|#1\right\|}
\newcommand{\inner}[1]{\left\langle#1\right\rangle}
\newcommand{\secref}[1]{Section~\ref{#1}}
\newcommand{\figref}[1]{Fig.~\ref{#1}}
\renewcommand{\eqref}[1]{Eq.~(\ref{#1})}
\newcommand{\lemref}[1]{Lemma~\ref{#1}}
\newcommand{\claimref}[1]{Claim~\ref{#1}}
\newcommand{\thmref}[1]{Theorem~\ref{#1}}
\newcommand{\propref}[1]{Proposition~\ref{#1}}
\title{On the Complexity of Finding Small Subgradients in Nonsmooth Optimization}
\author{Guy Kornowski  \qquad Ohad Shamir\\
  Weizmann Institute of Science \\
%   \texttt{\{guy.kornowski,ohad.shamir\}@weizmann.ac.il}
}
\date{}
\begin{document}

\maketitle

\begin{abstract}
    We study the oracle complexity of producing $(\delta,\epsilon)$-stationary points of Lipschitz functions, in the sense proposed by \citet{zhang2020complexity}. While there exist dimension-free randomized algorithms for producing such points within $\widetilde{O}(1/\delta\epsilon^3)$ first-order oracle calls, we show that no dimension-free rate can be achieved by a deterministic algorithm. On the other hand, we point out that this rate can be derandomized for smooth functions with merely a logarithmic dependence on the smoothness parameter. Moreover, we establish several lower bounds for this task which hold for any randomized algorithm, with or without convexity.
    Finally, we show how the convergence rate of finding $(\delta,\epsilon)$-stationary points can be improved in case the function is convex, a setting which we motivate by proving that in general no finite time algorithm can produce points with small subgradients even for convex functions.
\end{abstract}

\section{Introduction}

We consider the problem of optimizing Lipschitz continuous functions $f:\reals^d\to\reals$ using a first-order algorithm, which utilizes values and derivatives of the function at various points. Though problems of this type are ubiquitous throughout modern machine learning, they are well known to be impossible to solve at a dimension-free rate without further assumptions such as convexity or smoothness.
For example, it is generally impossible to obtain local minima or approximate-stationary points of $f$ \citep{nemirovskiyudin1983,zhang2020complexity}, nor is it even possible to get close to such points within any finite time independent of $d$ \citep{kornowski2021oracle}.
Seeking to design reachable optimization goals for this large function class, \citet{zhang2020complexity} proposed the relaxed notion of a $(\delta,\epsilon)$-stationary point. Simply put, these are points for which there exists a convex combination of gradients in a $\delta$-neighborhood whose norm is less than $\epsilon$ (see \secref{sec: preliminaries} for a formal definition).
Their main contribution is a randomized algorithm, $\INGD$, which produces $(\delta,\epsilon)$-stationary points of any Lipschitz, bounded from below function within $\widetilde{O}(1/\delta\epsilon^3)$ iterations. Although $\INGD$ initially required access to a slightly nonstandard oracle, follow-up works modified the algorithm such that it will rely on access to a standard first-order oracle at differentiable points \citep{davis2021gradient,tian2022finite}. All of these algorithms are randomized and share the same oracle complexity of $\widetilde{O}(1/\delta\epsilon^3)$.

In this work, we aim towards a better understanding of the oracle complexity of producing $(\delta,\epsilon)$-stationary points in various settings.
First, we examine whether the algorithms mentioned above can be derandomized. 
Namely, what rate can be achieved by a deterministic algorithm that produces $(\delta,\epsilon)$-stationary points?
We solve this question by showing a strong lower bound, proving that deterministic algorithms cannot achieve \emph{any} dimension-free rate.
On the other hand, we point out that if the objective function is even slightly smooth, it is possible to provide a deterministic algorithm whose rate is $\widetilde{O}(1/\delta\epsilon^3)$ with only a logarithmic dependence on the smoothness parameter.

Next, we take the first step towards providing lower bounds which hold for any (possibly randomized) first-order method.
In terms of $\epsilon$ dependence, we provide a lower bound of $\Omega(1/\epsilon^2)$ which holds even for convex functions. In terms of $\delta$ dependence, we provide an $\Omega(\log(1/\delta))$ lower bound for nonconvex functions.
Though these results are clearly not tight, there are currently no randomized lower bounds whatsoever for this setting in terms of neither $\epsilon$ nor $\delta$. Noticeably, \citet{zhang2020complexity} do prove an $\Omega(1/\delta)$ lower bound, though their proof covers only deterministic algorithms which have access to the derivative alone (without function value).
As we conjecture the lower bounds are loose, especially with respect to $\delta$, providing tighter and unified lower bounds - in terms of $\delta$ \emph{and} $\epsilon$ - remains an intriguing open question.

Finally, we also examine the complexity of finding $(\delta,\epsilon)$-stationary of \emph{convex} functions. Although the vast majority of convex optimization literature  deals with minimizing the function value \citep{nesterov2018lectures}, some applications require the design of algorithms that produce points with small subgradients (see \citep{nesterov2012make,allen2018make} and discussions therein). Moreover, while for \emph{smooth} convex function the picture is relatively well understood in terms of complexity upper and lower bounds \citep{carmon2021lowerII}, for nonsmooth convex optimization much less is known in terms of minimizing subgradient norm.
We start by proving that even for convex yet nonsmooth functions, it is generally impossible to produce $\epsilon$-stationary points - namely, with some subgradient of norm at most $\epsilon$ (corresponding to $\delta\to0$). In fact, we prove an explicit $\delta$-dependent lower bound for convex functions, hinting that it might be beneficial to consider the relaxation of $(\delta,\epsilon)$-stationarity in this setting as well.
Furthermore, we show that the complexity of producing a $(\delta,\epsilon)$-stationary point of a convex function improves (when compared to the nonconvex case) to $\widetilde{O}(1/\delta^{2/3}\epsilon^2)$ iterations using a randomized algorithm, or (again) even a deterministic algorithm if the function is slightly smooth. These algorithms have optimal $\epsilon$-dependence as they match our aforementioned lower bound.

Overall, we hope this work will motivate further understanding of complexity guarantees for nonsmooth optimization as a whole, and of producing $(\delta,\epsilon)$-stationary points in particular.

This paper is structured as follows. In \secref{sec: preliminaries} we formally introduce notation and terminology which we use throughout the paper. In \secref{sec: deterministic} we present our results for deterministic algorithms, while in \secref{sec: rand lower} we provide our randomized lower bounds. In \secref{sec: convex stat} we turn to analyze the case of convex functions. We conclude in \secref{sec: discussion} with a discussion of our results and future directions.
\secref{sec: proofs} contains the full proofs of our results.

\section{Preliminaries} \label{sec: preliminaries}

\paragraph{Notation.}
We denote by $\reals^d$ the $d$-dimensional Euclidean space, any by $\inner{\cdot,\cdot},\norm{\,\cdot\,}$ the standard inner product and its associated Euclidean norm, respectively. Given a point $x\in\reals^d$ we denote by $x_i$ its $i$'th coordinate, and for any subset $A\subset\reals^d$ we denote their distance $\dist(x,A):=\inf_{y\in A}\norm{x-y}$.
We let $\e_i\in\reals^d$ be the $i$'th standard basis vector, $\NN=\{1,2,\dots\}$ be the natural numbers starting from 1, and $[N]=\{1,2,\dots,N\}$ be the natural numbers up to $N\in\NN$. We denote by $B_{r}(x)$ the open Euclidean ball around $x\in\reals^d$ of radius $r>0$, where $d$ is clear from context. We use standard big-O asymptotic notation: For functions $f,g:\reals\to[0,\infty)$ we write $f=O(g)$ if there exists $c>0$ such that $f(x)\leq c\cdot g(x)$; $f=\Omega(g)$ if $g=O(f)$. We occasionally hide logarithmic factors by writing $f=\widetilde{O}(g)$ if $f=O(g\log(g+1))$.

We call a function $f:\reals^d\to\reals$ $L$-Lipschitz if for any $x,y\in\reals^d:|f(x)-f(y)|\leq L\norm{x-y}$, and $H$-smooth if it is differentiable and $\nabla f:\reals^d\to\reals^d$ is $H$-Lipschitz, namely for any $x,y\in\reals^d:\norm{\nabla f(x)-\nabla f(y)}\leq H\norm{x-y}$.

\paragraph{Nonsmooth analysis.}
% As 
By Rademacher's theorem, Lipschitz functions are differentiable almost everywhere (in the sense of Lebesgue). Hence, for any Lipschitz function $f:\reals^d\to\reals$ and point $x\in\reals^d$ the Clarke sub-differential set \citep{clarke1990optimization} can be defined as
\[
\partial f(x):=\conv\{g~:~g=\lim_{n\to\infty}\nabla f(x_n),\,x_n\to x\}
~,
\]
namely, the convex hull of all limit points of $\nabla f(x_n)$ over all sequences of differentiable points which converge to $x$.
Note that if the function is differentiable at a point or convex, the Clarke sub-differential reduces to the gradient or subgradient in the convex analytic sense, respectively.

We say that a point $x$ is an $\epsilon$-stationary point of $f(\cdot)$ if there exists $g\in\partial f(x):\norm{g}\leq\epsilon$.
Furthermore, given $\delta>0$ the Goldstein $\delta$-subdifferential \citep{goldstein1977optimization} of $f$ at $x$ is the set
\[
\partial_{\delta}f(x):=\conv\Big(\bigcup_{z\in B_{\delta}(x)}\partial f(z)\Big)
~,
\]
namely all convex combinations of gradients at points in a $\delta$-neighborhood of $x$. We say that a point $x$ is a $(\delta,\epsilon)$-stationary of $f(\cdot)$ if there exists $g\in\partial_{\delta}f(x)$ such that $\norm{g}\leq\epsilon$. Note that a point is $\epsilon$-stationary if and only if it is $(\delta,\epsilon)$-stationary for all $\delta>0$ \citep[Lemma 7]{zhang2020complexity}.

\paragraph{Algorithms and complexity.}
Throughout this work we consider (possibly randomized) iterative first-order algorithms, from an oracle complexity perspective \citep{nemirovskiyudin1983}. Such an algorithm first produces $x_1$ possibly at random and receives $(f(x_1),\partial f(x_1))$.\footnote{For the purpose of this work it makes no difference whether the algorithm gets to see some subgradient or the whole subdifferential set. That is, the lower bounds to follow hold even if the algorithm has access to the entire subdifferential set, while the upper bounds hold even if the algorithm receives a single subgradient.} Then, for any $t>1$ produces $x_t$ possibly at random based on previously observed responses, and receives $(f(x_t),\partial f(x_t))$. We are interested in the minimal number $T$ for which it can guarantee to produce some $(\delta,\epsilon)$ stationary point. Namely, given some function class $\Fcal$ we look at
\[
\sup_{f\in\Fcal}\min_{T\in\NN}\Pr_{\Acal}[\exists t\in[T]:x_t~\mathrm{is\,}(\delta,\epsilon)\mathrm{-stationary}]\geq\frac{2}{3}
~,
\]
where the probability is with respect to the algorithm's internal randomness when acting upon $f\in\Fcal$ (or deterministically, if the algorithm is deterministic).

\section{Deterministic algorithms} \label{sec: deterministic}

As discussed in the introduction, \citep{zhang2020complexity,davis2021gradient,tian2022finite} all provide randomized algorithms that given any $L$-Lipschitz function $f:\reals^d\to\reals$ and an initial point $x_1$ that satisfies $f(x_1)-\inf_{x}f(x)\leq \Delta$, produce a $(\delta,\epsilon)$-stationary point of $f$ within $\widetilde{O}(\Delta L^2/\delta\epsilon^3)$ oracle calls to $f$. We start by showing that this rate, let alone any dimension-free rate whatsoever, cannot be achieved by any deterministic algorithm.

\begin{theorem} \label{thm: deterministic lower}
For any deterministic first-order algorithm and any iteration budget $T\in\NN$, there exists a $1$-Lipschitz function $f:\reals^d\to\reals,\,d=T+2$ such that $f(x_1)-\inf_{x}f(x)\leq 1$ yet the $T$ iterates produced by the algorithm when applied to $f$ are not $(\delta,\epsilon)$-stationary points for any $\delta\leq\frac{1}{7},\,\epsilon\leq\frac{1}{252}$.
\end{theorem}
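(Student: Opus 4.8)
The plan is to use a classical resisting-oracle (adversary) argument against a deterministic algorithm, building the hard function coordinate-by-coordinate as the algorithm queries points. Since the algorithm is deterministic, its first query $x_1$ is fixed, and thereafter each $x_t$ is a deterministic function of the responses $(f(x_1),\partial f(x_1)),\dots,(f(x_{t-1}),\partial f(x_{t-1}))$ it has seen. The idea is to work in $\reals^d$ with $d=T+2$ and use a function of the form $f(x)=\max_{i} \big( a_i - c\, x_i \big)$ (or a close variant, e.g.\ a ``staircase'' like $\max\{\,|x_1| - 1,\ x_1 - c x_2,\ \dots\,\}$ analogous to Nesterov's worst-function construction for nonsmooth optimization). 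The point is that the only regions where $\partial_\delta f$ contains a small vector are near the ``corner'' where many of the defining affine pieces are simultaneously active; away from that corner, every Clarke subgradient (and every Goldstein $\delta$-subgradient, for $\delta\le 1/7$) has norm bounded below by an absolute constant, which will be at least $1/252$.

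The key steps, in order, would be: (1) Fix the algorithm; reveal the function lazily. Maintain after $t$ queries a partial description consistent with all responses so far, in which only coordinates $1,\dots,t$ (say) have been ``activated.'' Answer query $x_t$ using only the already-activated coordinates plus one fresh coordinate $e_{t+1}$ that the algorithm has provably not yet learned about — choosing the sign/placement of the new piece so that the subgradient returned is always some fixed basis-like vector of norm $1$ that reveals no information about coordinates $> t+1$. (2) Verify consistency: after all $T$ rounds, exhibit a single $1$-Lipschitz function $f$ on $\reals^{T+2}$ agreeing with every answer given, and check $f(x_1)-\inf f \le 1$ (this is why we need the coefficients $c<1$ and the normalization; the infimum is attained/approached at the hidden corner). (3) Lower-bound $\norm{g}$ for every $g\in\partial_\delta f(x_t)$: show that each queried point $x_t$ is ``far'' (in the relevant sense) from the unique corner where gradients nearly cancel, because the last coordinates are untouched, so within any $\delta$-ball around $x_t$ with $\delta\le 1/7$ the active affine pieces all share a common descent direction, forcing every convex combination of gradients to have norm $\ge 1/252$. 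The constants $7$ and $252$ will fall out of choosing $c$ and the scale so that (a) $\Delta\le1$ and (b) a $\delta$-ball of radius $1/7$ cannot reach the cancellation corner from any algorithm query.

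The main obstacle I expect is step (3): controlling the Goldstein $\delta$-subdifferential rather than just the Clarke subdifferential. It is not enough that the queried points have a non-small gradient — we must ensure no convex combination of gradients taken over an entire $\delta$-ball around a queried point is small. This requires the geometric invariant that at (and near) every algorithm query, ``too few'' of the affine pieces of $f$ are active simultaneously, equivalently that the activated coordinates $1,\dots,t+1$ are few relative to what's needed for cancellation, and that the $\delta=1/7$ ball is too small to simultaneously activate the remaining pieces. Making this quantitative — pinning down exactly how many pieces must be active for $\partial_\delta f$ to contain an $\epsilon$-vector, and how that forces a separation of at least, say, $1/7$ between every query and the corner — is the crux, and is presumably where the specific choice of construction (and the precise constants $\delta\le 1/7$, $\epsilon\le 1/252$) is engineered. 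A secondary point to be careful about is that the algorithm is allowed to see the whole set $\partial f(x_t)$, not just one subgradient; the construction must therefore keep $\partial f(x_t)$ itself uninformative about the not-yet-activated coordinates (e.g.\ a singleton, or a face not involving those coordinates), which is why a $\max$ of affine functions — with controlled active sets at queried points — is the natural vehicle.
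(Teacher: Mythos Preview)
Your resisting-oracle instinct is right, but the specific construction you propose---a Nemirovski/Nesterov-style $\max$ of affine pieces with one coordinate activated per query---has a genuine gap in step (3), and the paper takes a quite different route.

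The problem is exactly the quantitative issue you flagged. In a max-of-affine function whose gradients are (signed) basis vectors, once $k$ pieces can be simultaneously active in a $\delta$-ball, the Goldstein subdifferential contains convex combinations of $k$ orthonormal vectors, and the minimum-norm such combination has norm $1/\sqrt{k}$. Your plan allows up to $t{+}1$ activated coordinates near the $t$-th query; but the theorem must hold for \emph{every} $T\in\NN$, so for $T\gtrsim 1/\epsilon^2=252^2$ the bound $1/\sqrt{t{+}1}\ge\epsilon$ fails at the later queries. ``Few relative to what's needed for cancellation'' is not few in an absolute sense, and here you need an absolute constant lower bound on $\|g\|$ independent of $T$. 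Engineering the staircase so that only $O(1)$ pieces are ever active in a $\delta$-ball around each query is conceivable, but you haven't described how to do it, and it is not automatic---the algorithm may deliberately query near boundaries of already-committed pieces.

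The paper sidesteps this entirely. The oracle returns the \emph{same} answer $(f,\nabla f)=(0,\e_1)$ at every query; since the algorithm is deterministic this already pins down $x_1,\dots,x_T$. Then, using that $d=T+2$, pick a unit vector $v\perp\mathrm{span}\{\e_1,x_1,\dots,x_T\}$ and build a function that equals $v^\top x$ globally except in tiny balls around each $x_i$, where it is smoothly patched to look like $\e_1^\top(x-x_i)$ (so the oracle answers are consistent). Because the ``global'' gradient is a single fixed unit vector $v$, there is no cancellation mechanism at all; the only work is a direct calculation showing the patches cannot produce a small convex combination either, which yields a uniform lower bound ($1/36$ before rescaling) on every element of every $\partial_\delta$-set. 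Finally cap below by $-1$ and divide by the Lipschitz constant $7$ to get the stated $\delta\le 1/7$, $\epsilon\le 1/252$. This avoids the growing-active-set issue that breaks your approach.
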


We defer the full proof to \secref{sec: proofs}, though the intuition can be described as follows. For any deterministic first-order algorithm, if an oracle can always return the ``uninformative'' answer $f(x_i)=0,\nabla f(x_i)=\e_1$ this fixes the algorithm iterates $x_1,\dots,x_T$. It remains to construct a Lipschitz function that will be consistent with the oracle answers, yet all the queried points are not $(\delta,\epsilon)$-stationary. To that end, we construct a function which in a very small neighborhood of each queried point $x_t$ looks like $x\mapsto \e_1^\top(x-x_t)$, yet in most of the space looks like $x\mapsto\max\{v^\top x,-1\}$ which has $(\delta,\epsilon)$-stationary points only when $x$ is correlated with $-v$. By letting $v$ be some vector which is orthogonal to all the queried points, we obtain the result.

The construction we just described crucially relies on the function being highly nonsmooth - essentially interpolating between two orthogonal linear functions in an arbitrarily small neighborhood.
As it turns out, if the function to be optimized is even slightly smooth, then the theorem above can be bypassed, as manifested in following theorem.

\begin{theorem} \label{thm: deterministic upper}
Suppose $f:\reals^d\to\reals$ is $L$-Lipschitz, $H$-smooth, and $x_1\in\reals^d$ is such that $f(x_1)-\inf_{x}f(x)\leq\Delta$. Then there is a deterministic first-order algorithm that produces a $(\delta,\epsilon)$-stationary point of $f$ within $O(\frac{\Delta L^2\log(H\delta/\epsilon)}{\delta\epsilon^3})$ oracle calls.
\end{theorem}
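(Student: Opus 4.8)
The plan is to derandomize the Goldstein-subgradient–style scheme underlying the known $\widetilde O(\Delta L^2/\delta\epsilon^3)$ randomized guarantees \citep{zhang2020complexity,davis2021gradient,tian2022finite}, using $H$-smoothness only to replace a randomized line-sampling subroutine by a deterministic bisection. Recall the skeleton of these methods: at the current point $x$ one maintains a finite set $G$ of gradients observed at points inside $B_\delta(x)$ (initialized as $G=\{\nabla f(x)\}$), computes $g$, the minimum-norm element of $\conv(G)$ — a deterministic convex-quadratic problem involving no oracle calls — and stops if $\norm{g}\le\epsilon$, since then $g\in\partial_\delta f(x)$ certifies that $x$ is $(\delta,\epsilon)$-stationary. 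Otherwise $-g/\norm{g}$ should act as a Goldstein descent direction, and the only place randomness entered the prior analyses was in probing the segment $[x,\,x-\delta g/\norm{g}]$ to either certify sufficient decrease or extract a useful new gradient; this is exactly the step I would derandomize.

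Concretely, consider the one-dimensional $H$-smooth function $h(s):=f(x-s g/\norm{g})$ on $[0,\delta]$, and note $h'(0)=-\inner{\nabla f(x),\,g/\norm{g}}\le-\norm{g}$ because $\nabla f(x)\in G$ and first-order optimality of $g$ gives $\inner{v,g}\ge\norm{g}^2$ for all $v\in G$. If $h(\delta)\le h(0)-\tfrac{\delta}{2}\norm{g}$, then moving to $x-\delta g/\norm{g}$ decreases $f$ by at least $\tfrac{\delta}{2}\norm{g}\ge\tfrac{\delta\epsilon}{2}$; since $f(x_1)-\inf f\le\Delta$, at most $2\Delta/(\delta\epsilon)$ such ``outer'' steps can occur, after which we restart the procedure at the new point. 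Otherwise, set $\psi(s):=h(s)-h(0)+\tfrac{\norm{g}}{2}s$, so that $\psi(\delta)>0$, while $H$-smoothness combined with $h'(0)\le-\norm{g}$ forces $\psi(s)=\tfrac{s}{2}(Hs-\norm{g})\le 0$ for all $0\le s\le\norm{g}/H$ (in particular this case arises only when $\delta>\norm{g}/H$, so the logarithm below is of a quantity $\ge 1$). Running bisection on $\psi$ between $s_{\mathrm{lo}}:=\norm{g}/(2H)$ and $s_{\mathrm{hi}}:=\delta$ for $k=O(\log(H\delta/\norm{g}))=O(\log(H\delta/\epsilon))$ rounds yields an interval $[a,b]$ of length $\le\delta 2^{-k}\le\norm{g}/(4H)$ with $\psi(a)<0\le\psi(b)$; the mean value theorem together with the $H$-Lipschitzness of $\psi'$ then gives $h'(b)\ge-\tfrac34\norm{g}$ (the constant $\tfrac34$ rather than $\tfrac12$ being exactly the slack bought by the bisection precision), so $y:=x-b\,g/\norm{g}\in B_\delta(x)$ satisfies $\inner{\nabla f(y),g}\le\tfrac34\norm{g}^2$. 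Adding $v:=\nabla f(y)$ to $G$ — and using $\norm{g-v}\le 2L$ — makes the new minimum-norm element $g'$ satisfy $\norm{g'}^2\le\norm{g}^2-\norm{g}^4/(64L^2)$; the standard recursion $a_{k+1}\le a_k-a_k^2/(64L^2)$ gives $a_k\le 64L^2/k$, so after $O(L^2/\epsilon^2)$ of these inner iterations either $\norm{g}\le\epsilon$ or an outer step has been triggered.

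Accounting then gives at most $O(\Delta/(\delta\epsilon))$ outer steps, each preceded by at most $O(L^2/\epsilon^2)$ inner iterations, each inner iteration spending $O(\log(H\delta/\epsilon))$ oracle calls on the bisection (plus $O(1)$ queries per outer step), for a total of $O\big(\Delta L^2\log(H\delta/\epsilon)/(\delta\epsilon^3)\big)$ oracle calls, and every step of the procedure is deterministic; moreover, since the sufficient-decrease and small-directional-derivative conditions are directly checkable, the bisection depth can be grown adaptively, so the algorithm need not know $H$, $L$, or $\Delta$. I expect the main obstacle to be precisely the line-search step: arguing that $h$ being merely $H$-smooth — not convex, with $h'$ possibly oscillating — still lets a logarithmic-depth bisection certify a point of small directional derivative \emph{inside the open ball} $B_\delta(x)$. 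The resolution is to bisect the auxiliary function $\psi$, whose sign pattern (nonpositive up to $\norm{g}/H$, strictly positive at $\delta$) is dictated purely by smoothness, rather than attempting to bisect $h'$ directly; a routine continuity perturbation handles the borderline case $b=\delta$, and the degenerate regime $\delta\le\norm{g}/H$ needs no bisection at all since there the full step $x\mapsto x-\delta g/\norm{g}$ is automatically an outer descent step.
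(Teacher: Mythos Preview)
Your proposal is correct and follows essentially the same approach as the paper: derandomize the $\INGD$ scheme by replacing the randomized line-sampling step with a deterministic binary search that exploits $H$-smoothness, and then combine the same outer/inner/line-search accounting to obtain $O(\Delta L^2\log(H\delta/\epsilon)/(\delta\epsilon^3))$. The only (minor) difference is in the bisection itself --- you bisect on the sign of the auxiliary function $\psi(s)=h(s)-h(0)+\tfrac{\norm{g}}{2}s$, whereas the paper, citing \citet{davis2021gradient}, bisects by comparing $f$ at the midpoint to the average at the endpoints so as to maintain a small \emph{average} directional derivative over the current subinterval --- but both variants terminate in $O(\log(H\delta/\epsilon))$ oracle calls and feed the same inequality $\inner{\nabla f(y),g}\le c\norm{g}^2$ into the min-norm recursion.
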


The simple idea which proves \thmref{thm: deterministic upper} is to replace a certain randomized line search in $\INGD$ with a deterministic binary search subroutine, which terminates within ${O}(\log(H\delta/\epsilon))$ steps if the function is $H$-smooth. Such a procedure is derived by \citet{davis2021gradient} for any $H$-weakly convex function along differentiable directions, and since any $H$-smooth function is $H$-weakly convex and differentiable along any direction, this can be applied as is proving \thmref{thm: deterministic upper}.
Although the algorithmic ingredient to this observation appears inside a proof of \citet{davis2021gradient}, they use it in a different way in order to produce a randomized algorithm for weakly convex functions in low dimension, with different guarantees suitable for that setting. For completeness we explain in \secref{sec: proofs} how this leads to \thmref{thm: deterministic upper}.

\section{Randomized lower bounds} \label{sec: rand lower}

We now turn to establish complexity lower bounds which will hold for any (possibly randomized) algorithm based on a
first-order oracle, as common in the optimization literature stemming from the seminal work of Nemirovski and Yudin \citep{nemirovskiyudin1983}. Our first lower bound will be for the complexity of producing $(\delta,\epsilon)$-stationary points in terms of $\epsilon$-dependence.

\begin{theorem} \label{thm:delta,epsilon eps-lower bound}
For any $\epsilon<1,~\delta\leq\frac{1}{12\epsilon}$ and any randomized first-order algorithm $\Acal$, there exists a $1$-Lipschitz convex function $f:\reals^d\to\reals,~d=\widetilde{O}({1}/{\epsilon^6})$ such that $f(x_1)-\inf_{x}f(x)\leq1$ yet $\Pr_{\Acal}[\exists t\in[T]:x_t~\mathrm{is\,}(\delta,\epsilon)\mathrm{-stationary}]<\frac{1}{3}$ unless $T=\Omega(1/\epsilon^2)$.
\end{theorem}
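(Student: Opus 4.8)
The plan is to reduce this to a standard information-theoretic lower bound for nonsmooth convex optimization in the style of Nemirovski--Yudin, using a randomized ``resisting oracle'' built on a max-of-linear-forms construction. Concretely, I would fix a large dimension $d$ and a random orthonormal-like sequence of directions $v_1,\dots,v_k\in\reals^d$ with $k=\Theta(1/\epsilon^2)$, drawn so that they are essentially orthogonal and unknown to the algorithm (e.g.\ a uniformly random permutation/sign pattern over the standard basis, or random near-orthogonal unit vectors — the point being that before the algorithm has ``discovered'' $v_1,\dots,v_{j-1}$, the next direction $v_j$ looks like a uniformly random direction in a huge orthogonal complement). Define the hard function as a scaled version of
\[
f(x) \;=\; \max_{j\in[k]}\Big\{ \gamma\, v_j^\top x - (j-1)\eta \Big\}
\]
for suitable small constants $\gamma,\eta$ depending on $\epsilon$, possibly capped from below by a constant so that $f$ is bounded and $f(x_1)-\inf f\le 1$. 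This $f$ is convex and $\gamma$-Lipschitz (so $\gamma\le 1$ makes it $1$-Lipschitz). The ``staircase'' offsets $(j-1)\eta$ are chosen so that near a point where only the first $j-1$ directions have been revealed, the active piece is one of $v_1,\dots,v_{j-1}$, and the oracle can answer with the corresponding gradient $\gamma v_j$ while hiding all later directions — the standard chaining/probabilistic argument then shows that with high probability the algorithm needs $\Omega(k)$ queries to make the last few pieces active.

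The second ingredient is to certify that any point the algorithm visits before discovering enough directions is \emph{not} $(\delta,\epsilon)$-stationary. For the max-of-linear-forms function above, at a point $x$ where the active set is $S\subseteq[k]$, the Clarke subdifferential is $\gamma\,\conv\{v_j : j\in S\}$, and the Goldstein $\delta$-subdifferential $\partial_\delta f(x)$ is contained in $\gamma\,\conv\{v_j : j\in S_\delta\}$ where $S_\delta$ is the set of pieces active somewhere in $B_\delta(x)$. Because the $v_j$ are (near-)orthonormal, any convex combination $\gamma\sum_{j} \lambda_j v_j$ has norm $\approx \gamma\|\lambda\|\ge \gamma/\sqrt{|S_\delta|}$. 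The condition $\delta\le \frac{1}{12\epsilon}$ together with the Lipschitz/offset parameters is used exactly to control $|S_\delta|$: moving a distance $\delta$ can only activate pieces whose linear values are within $O(\gamma\delta)$ of the current max, and the spacing $\eta$ limits how many of those there are. Calibrating $\gamma\sim\epsilon$ and $\eta$ so that $|S_\delta|$ stays below the threshold where $\gamma/\sqrt{|S_\delta|}>\epsilon$, we conclude that as long as the algorithm has not forced $\Omega(k)$ pieces to become simultaneously (nearly) active — which by the resisting-oracle argument requires $\Omega(k)=\Omega(1/\epsilon^2)$ queries — none of its iterates is $(\delta,\epsilon)$-stationary. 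Taking the dimension $d$ polynomially large in $k$ with logarithmic slack (to make the random directions near-orthogonal with the required probability, via Johnson--Lindenstrauss / concentration of measure on the sphere) yields $d=\widetilde O(1/\epsilon^6)$.

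The routine parts are the Lipschitz and boundedness bookkeeping and the arithmetic that fixes the constants $\frac{1}{12}$, the norm lower bound, and the exact power in $d=\widetilde O(1/\epsilon^6)$. The main obstacle — and the step I would spend the most care on — is making the ``resisting oracle'' argument fully rigorous for a \emph{randomized} algorithm: one must show that, conditioned on the algorithm's entire history, the undiscovered directions $v_j$ are still distributed near-uniformly in a high-dimensional subspace, so that the inner product $v_j^\top x_t$ of any queried point with any not-yet-revealed direction is $O(\gamma\cdot\|x_t\|/\sqrt{d-t})$ with overwhelming probability; a union bound over the $\le T$ queries and $\le k$ directions then shows the algorithm cannot ``accidentally'' activate a new piece without having been told about it. This is where the large $d$ (hence the $1/\epsilon^6$) is really needed, and where one has to be careful that the iterates $x_t$ don't blow up in norm — which is why it helps to cap $f$ from below, forcing the relevant action to happen in a bounded region.
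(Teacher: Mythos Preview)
Your high-level plan (a max-of-pieces Nemirovski-style function combined with a random orthogonal rotation to defeat randomized algorithms, and a norm lower bound for convex combinations of near-orthonormal gradients) is exactly the paper's strategy. But the way you link $(\delta,\epsilon)$-stationarity to ``progress'' is off, and with your stated calibration the argument breaks.

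You propose $\gamma\sim\epsilon$ and then want $\gamma/\sqrt{|S_\delta|}>\epsilon$, which forces $|S_\delta|=O(1)$. But the offsets $(j-1)\eta$ only space out the \emph{undiscovered} pieces near a queried point; once the algorithm has been handed $v_1,\dots,v_t$, it can simply choose $x$ with $v_j^\top x=(j-1)\eta/\gamma$ for $j\le t$, making all $t$ discovered pieces tied at the max and hence all in $S_\delta$. So after a constant number of queries your norm lower bound is already violated, and nothing about $\delta\le\frac{1}{12\epsilon}$ helps. The ``control $|S_\delta|$'' mechanism is not where the bound actually comes from.

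The paper instead takes $f_{T,\alpha}(x)=\max_{i\in[T]}\{|x_i-1|+3\alpha(T-i)\}$ with $T=\tfrac{1}{4\epsilon^2}$ and $\alpha=\tfrac{1}{9T}$, i.e.\ Lipschitz constant $1$ and V-shaped (not linear) pieces, and proves a structural lemma: any $(\delta,\epsilon)$-stationary point $x$ must have $x_i\ge\frac13$ for at least $\tfrac{3T}{4}$ coordinates, hence $\alpha$-progress $\ge\tfrac{3T}{4}$. The argument is a dichotomy. If no $y\in B_\delta(x)$ has an active coordinate exactly at $1$, then every element of $\partial_\delta f(x)$ is a convex combination of $\pm e_i$ with a \emph{fixed sign per $i$}, so its norm is at least $1/\sqrt{T}=2\epsilon>\epsilon$ regardless of how many pieces are active. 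Otherwise some such $y$ exists, and the offset structure forces \emph{every} $y_j\in[\tfrac23,\tfrac43]$; now the bound $\delta\le\frac{1}{12\epsilon}=\frac{\sqrt{T}}{6}$ is exactly what prevents $x$ from differing from $y$ by $\ge\frac13$ in more than $T/4$ coordinates. So $\delta\le\frac{1}{12\epsilon}$ is not used to bound $|S_\delta|$; it bounds the distance from $x$ to the all-ones minimizer (which sits at norm $\sqrt{T}=\Theta(1/\epsilon)$). Feeding this into the standard robust-zero-chain machinery with a random $U$ and a large-norm cap $\max\{f_{T,\alpha}(U^\top x),\,2(\|x\|-2\sqrt{T})\}$ (note: capping for large $\|x\|$, not from below) finishes the proof.
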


An observation we would like to make is that \thmref{thm:delta,epsilon eps-lower bound} actually provides an optimal $\Omega(1/\delta\epsilon^3)$ lower bound in a certain parameter regime. Indeed, since it holds even when $\delta=\Omega({1}/{\epsilon})$, for such a choice of $\delta$ we have $\Omega({1}/{\epsilon^2})=\Omega({1}/{\delta\epsilon^3})$, resulting in the optimal bound. In standard settings in optimization which involve optimizing with respect to a single complexity parameter (e.g. $\epsilon$-sub-optimality or $\epsilon$-stationarity) providing a lower bound in a certain parameter regime typically extends to any parameter regime through rescaling
tricks of the form $f(x)\mapsto C_1 f(C_2\cdot x)$. Unfortunately, this does not seem to be the case for $(\delta,\epsilon)$-stationarity as there aren't enough degrees of freedom to control the scales of both $\delta$ and $\epsilon$, while maintaining the Lipschitz and initial sub-optimality constant. 

The proof of \thmref{thm:delta,epsilon eps-lower bound} appears in \secref{sec: proofs}, and is based on the well established machinery of high-dimensional optimization lower bounds (see for example \citet{nemirovskiyudin1983,nesterov2018lectures,woodworth2017lower,carmon2020lowerI}).
To sketch the proof idea, we examine a random orthogonal transformation on top of the so called ``Nemirovski function'', which is approximately of the form $x\mapsto \max\{|x_i-1|\}$.
Though the Nemirovski function is commonly used to prove lower bounds in terms of sub-optimality (i.e. $f(x_t)-\inf_{x}f(x)$), we develop an analysis of its $(\delta,\epsilon)$-stationary points from which a complexity lower bound can be derived.
On the one hand, any oracle response reveals information about a single coordinate which roughly leads to a lower bound of $d$ queries in order to get information about $d$ coordinates. On the other hand, the subgradients at points which are not near $(1,1,\dots,1)\in\reals^d$ are convex combinations of $\pm\e_i$ for some standard basis vectors $\e_i$, thus have norm of at least $\Omega(1/\sqrt{d})$ which is smaller than $\epsilon$ only whenever $d=\Omega(1/\epsilon^2)$.

As to $\delta$-dependent lower bounds, we provide the following theorem.

\begin{theorem} \label{thm: nonconvex 1dim}
For any randomized first-order algorithm $\Acal$ and any $\delta,\epsilon\leq\frac{1}{4}$, there exists a 1-Lipschitz function $f:\reals\to\reals$ such that $f(x_1)-\inf_{x}f(x)\leq 1$, yet $\Pr_{\Acal}[\exists t\in[T]:x_t~\mathrm{is\,}(\delta,\epsilon)\mathrm{-stationary}]<\frac{1}{3}$ unless $T=\Omega(\log(1/\delta))$.
\end{theorem}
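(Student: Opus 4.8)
The plan is to construct a one-dimensional ``resisting oracle'' that forces any first-order method into a binary search for the location of a hidden near-stationary region, so that $\Theta(\log(1/\delta))$ queries are needed to localize it to scale $\delta$. Since a randomized algorithm is a distribution over deterministic ones, I would first reduce to deterministic algorithms via Yao's principle: it suffices to exhibit a distribution $\mathcal{P}$ over $1$-Lipschitz functions $f:\reals\to\reals$, each bounded with range inside $[0,1]$ --- so that $f(x_1)-\inf_x f(x)\le 1$ holds for free, regardless of where the deterministic method places $x_1$ --- on which every deterministic method fails to query a $(\delta,\epsilon)$-stationary point within $T=o(\log(1/\delta))$ iterations with probability $>2/3$; averaging over $f\sim\mathcal{P}$ then produces a single bad $f$ for the given randomized algorithm. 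Equivalently, and more conveniently, I would describe this distribution implicitly through an adversary that answers queries online while always remaining consistent with such a bounded $1$-Lipschitz function.

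The functions will be piecewise linear with every slope of magnitude strictly between $\epsilon$ and $1$, so that at any point lying in the interior of a linear piece one has $\partial_\delta f=\{f'\}$ with $|f'|>\epsilon$, hence such a point is \emph{not} $(\delta,\epsilon)$-stationary. The sole exception is a single hidden ``valley'' --- either a $\vee$-shaped kink or a tiny flat segment at the bottom --- within distance $\delta$ of which every point \emph{is} $(\delta,\epsilon)$-stationary, because its $\delta$-neighborhood straddles the valley and therefore $0\in\partial_\delta f$. The valley is placed at a secret location, while its floor value and the finer zigzag structure flanking it are kept flexible. The adversary maintains a shrinking interval $I_t$ (initially of length $\Theta(1)$) with the invariant that every answer so far is consistent with the valley lying anywhere in $I_t$ and with no queried point being within $\delta$ of it; outside $I_t$ the function is already committed to a monotone slope-$\pm\epsilon'$ shape (for a fixed $\epsilon'\in(\epsilon,1)$) with no kinks. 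A query outside $I_t$ is answered by the committed value and slope and conveys nothing new; a query at $x\in I_t$ is answered so as to halve $I_t$ while keeping the valley inside, and --- crucially --- so that the reported function value is the one dictated by the already-committed ($x_1$-side) portion of $f$, i.e.\ invariant to where in $I_t$ the valley sits, so the response leaks only the single bit ``valley is left of $x$'' versus ``valley is right of $x$''.

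With this strategy $|I_t|$ shrinks by a factor at most $2$ per iteration, so after $t<\log_2\!\big(|I_1|/(2\delta)\big)=\Theta(\log(1/\delta))$ queries we still have $|I_t|>2\delta$; a short geometric check then shows that a valley placed anywhere in $I_t$ stays more than $\delta$ away from all queried points, so no iterate is $(\delta,\epsilon)$-stationary. (Conversely, once $|I_t|<2\delta$ one of the two queried endpoints of $I_t$ is automatically within $\delta$ of the valley, which is why the algorithm cannot do better than this binary search.) Since $\epsilon'\in(\epsilon,1)$ may be taken to be an absolute constant (as $\epsilon\le\frac14$), this yields $T=\Omega(\log(1/\delta))$. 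The hard part will be verifying that the adversary can \emph{always} answer consistently with one valid bounded $1$-Lipschitz function while leaking at most a bit: hiding the subgradient is easy, but the \emph{function value} at a query on the far side of the valley tends to determine the valley's position, so the construction must endow the valley with enough flexible surrounding structure --- auxiliary hidden kinks which are themselves non-stationary, their slopes having magnitude in $(\epsilon,1)$ --- to absorb this extra degree of freedom, all while guaranteeing that these auxiliary kinks are never dragged within distance $\delta$ of a queried point. Making these consistency bookkeeping steps go through with explicit constants is the technical core of the argument.
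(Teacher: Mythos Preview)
Your proposal is correct and follows essentially the same route as the paper: a Yao reduction to a distribution over one-dimensional Lipschitz functions whose only near-stationary region is a hidden valley indexed by a binary string, so that each oracle response reveals $O(1)$ bits and $\Omega(\log(1/\delta))$ queries are needed to localize it to scale $\delta$. The paper makes this concrete by invoking an explicit parameterized family from prior work whose recursive nested-segment structure plays exactly the role of your ``auxiliary hidden kinks,'' and it streamlines the stationarity analysis by first applying \claimref{claim: 1dim_equivalence} to reduce $(\delta,\epsilon)$-stationarity in one dimension to being $\delta$-close to an $\epsilon$-stationary point.
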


Underlying the proof of \thmref{thm: nonconvex 1dim} is a useful observation on the structure of $(\delta,\epsilon)$-stationary points in dimension one: Although in general it is not true that a $(\delta,\epsilon)$-stationary point is necessarily $\delta$-close to an $\epsilon$-stationary point \citep{kornowski2021oracle} (yet the opposite implication is trivially true), for univariate functions these two notions are actually equivalent.

\begin{claim} \label{claim: 1dim_equivalence}
If $f:\reals\to\reals$ is Lipschitz, then $x\in\reals$ is $(\delta,\epsilon)$-stationary if and only if it is $\delta$-close to an $\epsilon$-stationary point.
\end{claim}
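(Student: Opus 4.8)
The plan is to prove the nontrivial direction: if $x$ is $(\delta,\epsilon)$-stationary, then some point within distance $\delta$ is $\epsilon$-stationary. The starting point is that $\partial_\delta f(x) = \conv\big(\bigcup_{z\in B_\delta(x)}\partial f(z)\big)$ is, in dimension one, a subset of $\reals$, hence an interval $[a,b]$. By assumption there is $g\in\partial_\delta f(x)$ with $|g|\le\epsilon$, so $[a,b]$ intersects $[-\epsilon,\epsilon]$. I would then argue that $a$ and $b$ are themselves attained (or approached) as one-sided derivatives at points of $\overline{B_\delta(x)} = [x-\delta, x+\delta]$: since each $\partial f(z)$ for $z\in\reals$ is itself a compact interval $[\partial^- f(z),\partial^+ f(z)]$ (the Clarke subdifferential in one dimension is the interval between the lower and upper Dini/limiting derivatives), the extreme values $a = \inf_{z\in \overline{B_\delta(x)}}\partial^- f(z)$ and $b=\sup_{z\in\overline{B_\delta(x)}}\partial^+ f(z)$ are attained by compactness of $[x-\delta,x+\delta]$ and upper semicontinuity of the Clarke subdifferential. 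So there exist $z_a, z_b \in [x-\delta,x+\delta]$ and $g_a\in\partial f(z_a)$, $g_b\in\partial f(z_b)$ with $g_a = a \le b = g_b$ (taking a limit to the closure if needed; a Lipschitz function extends its subgradient bounds to the closed ball).

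The key step is then an intermediate value argument. If $[a,b]$ already contains $0$, say $a\le 0\le b$, I split into cases. If $0\in\partial f(z_a)$ or $0\in\partial f(z_b)$ we are done directly. Otherwise $g_a = a < 0$ and $g_b = b > 0$ are strict, and I want to find a point $z^*$ between $z_a$ and $z_b$ (hence still within distance $\delta$ of $x$, since the segment between two points of $\overline{B_\delta(x)}$ stays in it) with $0\in\partial f(z^*)$. This follows because in one dimension a Lipschitz function $f$ restricted to the interval $[z_a,z_b]$ (assume WLOG $z_a<z_b$) has subgradients that are negative somewhere and positive somewhere; consider the point $z^*$ where $f$ attains its minimum over $[z_a, z_b]$ — if it is an interior minimum then $0\in\partial f(z^*)$ by the standard necessary condition, and if the minimum is at an endpoint, the sign of the one-sided derivative there forces the other endpoint's derivative sign, handled by swapping roles. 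If instead $[a,b]$ does not contain $0$ but still meets $[-\epsilon,\epsilon]$, then WLOG $0 < a \le \epsilon$; here the point $z_a$ with $a\in\partial f(z_a)$ and $|a|\le\epsilon$ is itself an $\epsilon$-stationary point within distance $\delta$ of $x$, and we are done.

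The main obstacle I anticipate is making the "one-sided derivative / intermediate value" bookkeeping fully rigorous for a general Lipschitz (not convex, not differentiable) univariate function: one must be careful that the Clarke subdifferential in one dimension really is the interval $[\liminf_{y\to z}\frac{f(y)-f(z)}{y-z}, \limsup \cdots]$ only up to the convexification, and that $\bigcup_{z\in B_\delta(x)}\partial f(z)$ has the same convex hull as its closure's union over $\overline{B_\delta(x)}$; the upper semicontinuity of $z\mapsto\partial f(z)$ (a standard property of the Clarke subdifferential) takes care of passing to the closed ball. Once the structural fact "$\partial_\delta f(x)$ is the interval spanned by one-sided derivatives at points of $\overline{B_\delta(x)}$" is nailed down, the rest is the elementary observation that a continuous function on a closed interval whose boundary derivative-signs straddle zero, or whose derivative range straddles zero, has an interior critical point, which is exactly an $\epsilon$-stationary (indeed $0$-stationary) point inside $\overline{B_\delta(x)}$.
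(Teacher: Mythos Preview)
Your approach is correct and shares the same skeleton as the paper's: reduce to two points in the $\delta$-ball carrying subgradients of opposite sign, then run an intermediate-value argument to locate a genuine ($0$-)stationary point between them. The implementation differs in two places. First, instead of extracting the extremal values $a,b$ of the interval $\partial_\delta f(x)$ and then worrying about attainment on the closed ball, the paper simply invokes Carath\'eodory in dimension one: the witnessing $g$ with $|g|\le\epsilon$ is already a convex combination $\lambda g_1+(1-\lambda)g_2$ of two actual subgradients at two actual points $t_1,t_2\in B_\delta(x)$, which immediately dispatches the case where one $|g_i|\le\epsilon$ and otherwise forces opposite signs. This sidesteps exactly the semicontinuity/closure obstacle you flag. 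Second, for the IVT step the paper takes $t^*=\sup\{t\in[t_1,t_2]:\partial f(t)\subset(-\infty,0)\}$ and argues $t^*$ picks up both a nonpositive and a nonnegative limiting subgradient, whereas you minimize $f$ over $[z_a,z_b]$. Your endpoint case is stated a bit loosely (``swapping roles'' is not the fix): if the minimum sits at $z_a$ where $g_a<0$, then $z_a$ also carries a nonnegative subgradient from the right, so $0\in\partial f(z_a)$ directly, and symmetrically at $z_b$. With that patch both routes work; the paper's is a touch more economical because Carath\'eodory hands you the two points for free.
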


We provide a proof of \claimref{claim: 1dim_equivalence} in \secref{sec: proofs}. This claim is useful for our purposes since it allows us to convert any one-dimensional lower bound for getting $\delta$-close to $\epsilon$-stationary points, into a lower bound for finding $(\delta,\epsilon)$-stationary points. The former can be done using a technique which appears in \citep{kornowski2021oracle} which itself is based on a technique for the deterministic setting due to \citet{nemirovski1995lecture}. The basic idea is to construct a function which is parameterized by a binary vector, such that optimizing the function essentially reduces to guessing the binary vector. If the function is constructed such that any oracle response reveals a constant number of bits from the vector, this implies that the length of the vector can serve as a complexity lower bound for the optimization problem.
Since the result in \citep{kornowski2021oracle} is asymptotic - for $\delta\to0$, or equivalently for a binary vector of arbitrary length - providing a specific $\delta$-dependent lower bound boils down to extracting and improving its $\delta$-dependence through a more careful analysis which appears in \secref{sec: proofs}.

\section{$(\delta,\epsilon)$-stationarity for convex functions} \label{sec: convex stat}

Recalling that for convex functions the Clarke sub-differential coincides with the convex subgradient \citep[Proposition 2.2.7]{clarke1990optimization}, we are interested in the possibility of producing points with small subgradients of convex Lipschitz functions.
It turns out that the same discussed proof idea of \thmref{thm: nonconvex 1dim} also extends to convex functions, albeit with substantially more technical work to ensure that global convexity is maintained. This yields the impossibility of producing $\epsilon$-stationary points within any finite time even for convex functions.

\begin{theorem} \label{thm: convex 1dim}
For any randomized first-order algorithm $\Acal$ and any $\delta,\epsilon\leq\frac{1}{4}$, there exists a convex 1-Lipschitz function $f:\reals\to\reals$ such that $f(x_1)-\inf_{x}f(x)\leq 1$, yet $\Pr_{\Acal}[\exists t\in[T]:x_t~\mathrm{is\,}(\delta,\epsilon)\mathrm{-stationary}]<\frac{1}{3}$ unless $T=\Omega(\sqrt{\log(1/\delta)})$. In particular, no finite time algorithm can produce a $\frac{1}{4}$-stationary point of 1-Lipschitz convex functions with constant probability.
\end{theorem}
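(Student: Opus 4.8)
The plan is to leverage Claim~\ref{claim: 1dim_equivalence} to reduce the problem of lower-bounding the complexity of finding a $(\delta,\epsilon)$-stationary point of a convex univariate function to that of lower-bounding the complexity of getting $\delta$-close to an $\epsilon$-stationary point, and then to build an explicit hard instance parameterized by a binary string of length $\Theta(\sqrt{\log(1/\delta)})$. By Claim~\ref{claim: 1dim_equivalence}, an iterate $x_t$ is $(\delta,\epsilon)$-stationary iff there is some $\epsilon$-stationary point within distance $\delta$; so it suffices to exhibit a convex $1$-Lipschitz $f:\reals\to\reals$ with $f(x_1)-\inf f\le 1$ whose unique ``approximately stationary'' region is a tiny interval that the algorithm cannot locate quickly. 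The natural candidate is a piecewise-linear convex function shaped like a very deep, narrow ``funnel'': away from a target point $x^\star$ the slope has absolute value bounded below by a constant (say $\ge 1/2$), so no such point is $\epsilon$-stationary for $\epsilon\le 1/4$, while near $x^\star$ the function flattens out. The target $x^\star$ is encoded by a binary string $b\in\{0,1\}^k$ through a bisection-type construction: each bit of $b$ tells the algorithm which of two sub-intervals contains $x^\star$, and the function is designed so that a first-order query at a point reveals (essentially) only the bits of $b$ corresponding to the interval in which the query lies — and only ``past'' that query point, not future ones. Concretely, one takes $f$ to look locally like $x\mapsto |x - c_I|$-type pieces on a nested sequence of intervals $I_0\supset I_1\supset\cdots\supset I_k$, with $|I_j|$ shrinking geometrically, arranged so that a query outside $I_j$ learns nothing about bits $\ge j$.

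The key steps, in order, would be: (i) State and use Claim~\ref{claim: 1dim_equivalence} to pass to the $\delta$-close-to-$\epsilon$-stationary formulation. (ii) Define, for each $b\in\{0,1\}^k$, a convex piecewise-linear $f_b$ on $\reals$ with the funnel structure above, with the flat/near-stationary region being an interval of width $\Theta(\delta)$ located at a point $x^\star(b)$ determined by bisection driven by the bits of $b$; verify $1$-Lipschitzness, convexity, and $f_b(x_1)-\inf f_b\le 1$ (this last point forces the total ``depth'' of the funnel to be $O(1)$, which is precisely what limits the number of nested levels to $k=\Theta(\sqrt{\log(1/\delta)})$ rather than $\Theta(\log(1/\delta))$ — the geometric shrinkage combined with the Lipschitz-and-bounded-below budget gives a quadratic rather than linear relation between depth budget and resolution $1/\delta$). (iii) Prove the ``information'' lemma: for any query point $x$, the pair $(f_b(x),\partial f_b(x))$ depends only on a bounded number of bits of $b$ (those of the interval containing $x$), so that after $T$ queries the algorithm has learned $O(T)$ bits. (iv) Invoke a standard averaging/Yao-type argument over a uniformly random $b$: since $x_T$ is $(\delta,\epsilon)$-stationary only if it lies within $\delta$ of $x^\star(b)$, which essentially requires knowing all $k$ bits of $b$, any algorithm making $T=o(k)=o(\sqrt{\log(1/\delta)})$ queries succeeds with probability $<1/3$. (v) Conclude the ``in particular'' claim: fixing $\epsilon=1/4$ and letting $\delta\to0$ forces $k\to\infty$, hence no finite-time algorithm can produce a $\frac14$-stationary point of convex $1$-Lipschitz functions with constant probability; equivalently, take the limiting construction to be a convex $1$-Lipschitz function with no $\frac14$-stationary point at all (a ``log-funnel'' of finite depth and width $\to0$).

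The main obstacle I expect is step (ii): maintaining \emph{global} convexity of the piecewise-linear instance while simultaneously enforcing that queries reveal only local bit-information and that the function stays $1$-Lipschitz and bounded below with $O(1)$ depth. In the nonconvex analogue (Theorem~\ref{thm: nonconvex 1dim}) one has freedom to make the function go up and down so that queries ``to the right'' of the funnel see a fixed uninformative slope; convexity removes this freedom, since the slopes must be monotone nondecreasing, so the function to the right of the target is forced to have large positive slope and to the left large negative slope, and one must carefully nest the bisection intervals and choose slope increments so that (a) the slope magnitude never drops below $\sim 1/2$ outside the innermost interval, (b) successive slope increments are small enough that the accumulated rise over the funnel stays $O(1)$, yet (c) the increments are placed at locations that still encode the bits. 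Reconciling (a)–(c) is exactly what costs the extra square root — one can only afford $k$ levels if the per-level slope budget is $\Theta(1/k)$ and the per-level width shrinks like $2^{-\Theta(k)}\cdot(\text{something})$, and working out the precise geometry so that everything is consistent is the technical heart of the argument. This is presumably the ``substantially more technical work to ensure that global convexity is maintained'' alluded to before the theorem statement; I would quarantine it into a self-contained construction lemma in \secref{sec: proofs}.
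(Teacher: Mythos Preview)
Your proposal is correct and follows essentially the same route as the paper: reduce via \claimref{claim: 1dim_equivalence}, build a family of convex piecewise-linear funnels indexed by $\sigma\in\{0,1\}^N$ via nested bisection intervals, prove that oracle answers outside $I_{\sigma_1,\dots,\sigma_k}$ are independent of $\sigma_{k+1},\dots,\sigma_N$, and finish with a Yao/Markov argument over uniform $\sigma$.

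One point worth sharpening: your heuristic for the square root (a per-level slope budget of $\Theta(1/k)$ forced by the depth constraint $f(x_1)-\inf f\le 1$) is not quite what drives it in the paper's construction. There the sub-optimality bound is satisfied for any number of levels; the $\sqrt{\log(1/\delta)}$ arises instead because the nested intervals shrink \emph{super}-geometrically --- the rescaling at level $i$ is by a factor $\Theta(8^{-i})$, not a fixed constant --- so that $|I_{\sigma_1,\dots,\sigma_k}|\asymp\prod_{i\le k}8^{-i}=8^{-\Theta(k^2)}$, and being $\delta$-close to the minimizer forces $k=\Theta(\sqrt{\log(1/\delta)})$. The increasing shrinkage is what lets the slopes $(h^i_\sigma)'\approx\pm(1-O(8^{-i}))$ approach $\pm 1$ monotonically (hence convexity) while staying bounded away from $0$ and keeping the two sub-interval options $I^i_0,I^i_1$ indistinguishable from outside. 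So when you write ``$|I_j|$ shrinking geometrically'' in step (ii), expect to need a level-dependent contraction factor; this is exactly the delicate geometry you flagged in your final paragraph.
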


Following the theorem above, we aim towards achievable relaxations of stationarity for convex Lipschitz functions. One natural relaxation is getting $\delta$-close to an $\epsilon$-stationary point. While it is known that without convexity no algorithm can guarantee getting to such points in a dimension-free rate \citep{kornowski2021oracle}, perhaps not surprisingly, this is not the case for convex functions. Indeed, given a Lipschitz convex function and a diameter bound $\dist(x_1,\arg\min_{x}f(x))\leq R$, \citet{davis2018complexity} provide a first-order algorithm that produces a point which is $\delta$-close to an $\epsilon$-stationary point of $f$ within $T=\widetilde{O}(\frac{R^2}{\delta^2\epsilon^2})$ oracle calls. Furthermore, since any such point is also a $(\delta,\epsilon)$-stationary point, then by \thmref{thm:delta,epsilon eps-lower bound} this is the best achievable rate for this task in terms of $\epsilon$-dependence.

As in the rest of this work, we consider the coarser relaxation of $(\delta,\epsilon)$-stationary points. It is important to emphasize that even for convex functions, 
the two notions of $(\delta,\epsilon)$-stationarity and being  $\delta$-close to an $\epsilon$-stationary point
% these notions
are inherently different.
Indeed, as we illustrate in \lemref{lemma: delta,epsilon not delta close} in the appendix, a $(\delta,\epsilon)$-stationary point can be arbitrarily far away from any $\epsilon$-stationary point.\footnote{Similar results appear in \citep{kornowski2021oracle,tian2022finite}. However, the former reference provides such a result for nonconvex functions. The latter reference does provides a result for convex functions, but the construction uses a point which is $2\delta$-close to an $\epsilon$-stationary one, rather than arbitrarily far. Accordingly, \lemref{lemma: delta,epsilon not delta close} strengthens both.}
It turns out that for the sake of producing $(\delta,\epsilon)$-stationary points
a simple combination of gradient descent and the $\INGD$ algorithm of \citep{zhang2020complexity} can provide a better dependence in terms of $\delta$ than \citep{davis2018complexity} as we claim next.

% \lemref{lemma: delta,epsilon not delta close}

\begin{theorem} \label{thm:delta,epsilon upper bound}
Suppose $f:\reals^d\to\reals$ is $L$-Lipschitz and convex, and $x_1\in\reals^d$ is such that $\dist(x_1,\arg\min_x f(x))\leq R$. Then there exists a randomized first-order algorithm that produces a $(\delta,\epsilon)$-stationary point of $f$ with probability at least $\frac{2}{3}$ within $T=\widetilde{O}\left(\frac{L^2 R^{2/3}}{\delta^{2/3}\epsilon^{2}}\right)$ oracle calls. Moreover, if $f$ is also $H$-smooth, then there exists such a deterministic algorithm with $T=\widetilde{O}\left(\frac{L^2 R^{2/3}\log(H)}{\delta^{2/3}\epsilon^{2}}\right)$.
\end{theorem}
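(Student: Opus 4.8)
The plan is to combine an outer gradient-descent loop that drives the objective value down with inner calls to $\INGD$ (or its deterministic variant, in the smooth case) that produce $(\delta,\epsilon)$-stationary points once the initial sub-optimality gap is small enough. Recall that $\INGD$ guarantees a $(\delta,\epsilon)$-stationary point within $\widetilde{O}(\Delta L^2/\delta\epsilon^3)$ oracle calls whenever started from a point with $f(x_1)-\inf f\leq\Delta$. The key observation is that the $\epsilon^{-3}$ factor comes from the sub-optimality gap $\Delta$ interacting with the target accuracy: if we first run projected/plain gradient descent on the convex function $f$ to reach a point $\tilde x$ with $f(\tilde x)-\inf f\leq \Delta_0$ for a suitably chosen small $\Delta_0$, then the subsequent $\INGD$ run costs only $\widetilde{O}(\Delta_0 L^2/\delta\epsilon^3)$. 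Balancing the cost of the two phases against the choice of $\Delta_0$ is what yields the improved exponents $\delta^{-2/3}\epsilon^{-2}$.

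Concretely, I would first invoke the standard convex first-order rate: plain gradient descent (on differentiable iterates, using a single subgradient) on an $L$-Lipschitz convex function with $\dist(x_1,\arg\min f)\le R$ produces, within $N$ steps, a point with $f(\tilde x)-\inf f = O(LR/\sqrt N)$; equivalently, reaching gap $\Delta_0$ costs $O(L^2R^2/\Delta_0^2)$ oracle calls. Phase two then costs $\widetilde{O}(\Delta_0 L^2/\delta\epsilon^3)$. The total is $\widetilde{O}\big(L^2R^2/\Delta_0^2 + \Delta_0 L^2/\delta\epsilon^3\big)$, which is minimized by taking $\Delta_0 = \Theta\big((R^2\delta\epsilon^3)^{1/3}\big)$, giving total cost $\widetilde{O}\big(L^2 R^{2/3}/(\delta^{2/3}\epsilon^2)\big)$ as claimed. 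One has to check that the intermediate accuracy $\Delta_0$ is compatible with $\epsilon$ (so that $\INGD$ is actually being asked for a meaningful target and not something trivially implied), but in the regime of interest $\epsilon \lesssim \Delta_0/\delta$ this is fine, and otherwise the bound is only easier. For the deterministic smooth case, I would simply replace the $\INGD$ call in phase two with the deterministic algorithm of \thmref{thm: deterministic upper}, whose cost is $O\big(\Delta_0 L^2\log(H\delta/\epsilon)/\delta\epsilon^3\big)$; gradient descent in phase one is already deterministic, so the whole procedure is deterministic and the same balancing gives the stated $\widetilde{O}(L^2R^{2/3}\log(H)/\delta^{2/3}\epsilon^2)$ bound (absorbing the $\log(H\delta/\epsilon)$ into $\widetilde O(\cdot)$ together with a $\log H$ factor).

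A couple of technical points need care. First, running gradient descent requires access to gradients at the queried points; since $f$ is Lipschitz convex it is differentiable almost everywhere, and the subgradient-based convergence guarantee holds regardless, so there is no real obstacle here — and in the smooth case everything is differentiable. Second, and more subtly, after phase one we hand $\tilde x$ to $\INGD$ as its initialization, and we must ensure the sub-optimality bound $f(\tilde x)-\inf f\le\Delta_0$ is exactly the quantity $\INGD$'s analysis needs (it is, since $\inf f$ is attained by convexity and the diameter assumption, so $\Delta_0$ is a valid bound on $f(\tilde x)-\inf_x f(x)$). The probability bookkeeping is trivial: phase one is deterministic, so the overall success probability equals that of the single $\INGD$ call, which is $\ge 2/3$.

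The main obstacle is not any single step but getting the optimization of $\Delta_0$ to interact correctly with the constraints linking $\delta$ and $\epsilon$: unlike single-parameter settings, here the rescaling $f(x)\mapsto C_1 f(C_2 x)$ does not give enough freedom to normalize both $\delta$ and $\epsilon$ simultaneously (as noted in the discussion after \thmref{thm:delta,epsilon eps-lower bound}), so one must verify directly that the chosen $\Delta_0$ keeps $L$, $R$, $\delta$, $\epsilon$ in a consistent regime and that the $\widetilde{O}$ really does absorb all the logarithmic overhead coming from $\INGD$'s internal randomized line search (resp. the binary search in the smooth case). I expect this bookkeeping, rather than any conceptual difficulty, to be where the real work lies.
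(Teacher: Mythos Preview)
Your proposal is correct and follows essentially the same approach as the paper: run (sub)gradient descent to drive the sub-optimality down to $\Delta_0=\Theta\big((R^2\delta\epsilon^3)^{1/3}\big)=R^{2/3}\delta^{1/3}\epsilon$, then run $\INGD$ (or its deterministic smooth variant from \thmref{thm: deterministic upper}) from that point, with the two phases costing the same $\widetilde{O}(L^2R^{2/3}/\delta^{2/3}\epsilon^{2})$. The paper's proof simply states the balanced values of $T_1,T_2,\Delta$ directly rather than deriving them via your optimization over $\Delta_0$, but the argument is identical.
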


The proof is provided in \secref{sec: proofs}. We would like to point out is that the improved rates for convex functions provided by \thmref{thm:delta,epsilon upper bound} require the stronger assumption of a domain bound $R:=\dist(x_1,\arg\min_{x}f(x))$, as opposed to a sub-optimality bound $\Delta:=f(x_1)-\inf_{x}f(x)$. This is indeed a stronger assumption since for any $L$-Lipschitz function we have $\Delta\leq LR$. Furthermore, since the lower bound in \thmref{thm:delta,epsilon eps-lower bound} holds for convex functions, and as we explained why it provides an $\Omega(1/\delta\epsilon^3)$ lower bound for a certain parameter regime,
we cannot expect such an improvement in general without any further assumption. We note that this is analogous to the smooth convex case for which a gap between $\Delta$-based rate and $R$-based rate exists as well \citep{carmon2021lowerII}.

\section{Discussion} \label{sec: discussion}

In this paper, we studied the oracle complexity of producing $(\delta,\epsilon)$-stationary points in various settings. Following our results, there are several notes and open ends we would like to point out.

In our opinion, the main open question in this realm is narrowing the gap between the $\widetilde{O}(1/\delta\epsilon^3)$ complexity upper bound of the $\INGD$ algorithm, and our $\Omega(1/\epsilon^2+\log(1/\delta))$ complexity lower bound (which follows by combining \thmref{thm:delta,epsilon eps-lower bound} and \thmref{thm: nonconvex 1dim}). We observed that the $\Omega(1/\epsilon^2)$ lower bound in \thmref{thm:delta,epsilon eps-lower bound} actually matches the upper bound in a certain parameter regime, which we suspect hints that the lower bound can be further improved. This would require in particular a polynomial lower bound in terms of $\delta^{-1}$ which we do not currently know how to prove.

We also examined the complexity of producing small subgradients of convex Lipschitz functions, showing that no finite algorithm can obtain such points unless a positive $\delta$-relaxation is introduced, similarly to the nonconvex case. Given an additional assumption of a domain bound as opposed to a sub-optimality bound, we were able to provide improved rates for convex functions.
We would like to point out that although the lower bound in \thmref{thm:delta,epsilon eps-lower bound} is stated in terms of a sub-optimality bound, the same proof can be easily adapted to the bounded domain case. Indeed, by replacing $|x_i-1|$ in the proof by $|x_i-R/\sqrt{d}|$, the same result holds for any $\delta=O(R)$.

Another interesting direction for future work is to establish dimension-dependent lower bounds for producing $(\delta,\epsilon)$-stationary points. Although several recent works considered this setting \citep{davis2021gradient,lin2022gradientfree}, no complementing lower bounds currently exist.

\section{Proofs} \label{sec: proofs}

\subsection{Proof of \thmref{thm: deterministic lower}}

Fix $T\in\NN$.
Suppose that for any $i\in[T-1]$ the first-order oracle response is $f(x_i)=0,\nabla f(x_i)=\e_1$. Since the algorithm is deterministic this fixes the iterate sequence $x_1,\dots,x_T$.
We will show this resisting strategy is indeed consistent with a function which satisfies the conditions in the theorem.

To that end, we denote $r:=\min_{1\leq i\neq j\leq T}\norm{x_i-x_j}/4$ and fix some $v\in(\mathrm{span}\{\e_1,x_1,\dots,x_T\})^\perp$ with $\norm{v}=1$ (which exists since $d=T+2$). For any $z\in\reals^d$ we define
\[
g_{z}(x):=\min\{\norm{x-z}^2/r^2,1\}v^\top x+(1-\min\{\norm{x-z}^2/r^2,1\})\e_1^\top(x-z)~,
\]
and further define
\[
h(x):=\begin{cases}
v^\top x\,,&\forall i\in[T]:\norm{x-x_i}\geq r
\\
g_{x_i}(x)\,,&\exists i\in[T]:\norm{x-x_i}< r
\end{cases}
~.
\]
Note that $h$ is well defined since by definition of $r$ there cannot be $i\neq j$ such that $\norm{x-x_i}<r$ and $\norm{x-x_j}<r$.

\begin{lemma}
$h:\reals^d\to\reals$ as defined above is $7$-Lipschitz, satisfies for any $i\in[T]:\,h(x_i)=0,\nabla h(x_i)=\e_1$ and has no $(\delta,\frac{1}{36})$-stationary points for any $\delta>0$.
\end{lemma}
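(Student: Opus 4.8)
The plan is to verify the three assertions after one structural reduction. Since $v\perp\mathrm{span}\{\e_1,x_1,\dots,x_T\}$ we have $v^\top x=v^\top(x-x_i)$, so each ``inner'' branch is a translate of a single model function: $g_{x_i}(x)=\widetilde g(x-x_i)$, where $\widetilde g(u):=\min\{\norm{u}^2/r^2,1\}\,v^\top u+\big(1-\min\{\norm{u}^2/r^2,1\}\big)\,\e_1^\top u$. On $B_r(0)$ this is $C^1$, and parametrizing $u=rs\hat u$ with $\norm{\hat u}=1$, $s\in[0,1)$, differentiation gives
\[
\nabla\widetilde g(u)=2s^2\inner{v-\e_1,\hat u}\hat u+s^2 v+(1-s^2)\e_1 .
\]
The first two assertions are then immediate: $h(x_i)=g_{x_i}(x_i)=\widetilde g(0)=0$ (since $\min\{0,1\}=0$), and $\nabla h(x_i)=\nabla\widetilde g(0)=\e_1$ (set $s=0$ above).

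For the Lipschitz bound I would first check continuity: on each sphere $\norm{x-x_i}=r$ one has $\widetilde g(x-x_i)=v^\top x$, agreeing with the outer branch. Moreover the balls $B_r(x_i)$ are pairwise at distance $\ge 2r$ by the definition of $r$, so $h$ coincides \emph{locally} with one of the locally Lipschitz functions $g_{x_i}$ or $x\mapsto v^\top x$ at every point; hence $h$ is locally Lipschitz, and writing $G$ for the closure of its set of gradients we have $\partial h(y)\subseteq\conv(G)$ for every $y$. From the displayed formula $\norm{\nabla\widetilde g(u)}\le 2s^2\norm{v-\e_1}+1\le 2\sqrt2+1<7$, and $\norm{v}=1$, so every Clarke subgradient of $h$ has norm $<7$; Lebourg's mean value theorem then yields that $h$ is $7$-Lipschitz.

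The crux is ruling out $(\delta,\tfrac1{36})$-stationary points for \emph{every} $\delta>0$. Since $\partial h(y)\subseteq\conv(G)$ for all $y$, also $\partial_\delta h(x)\subseteq\conv(G)$ for all $x,\delta$, so it suffices to show $\dist(0,\conv(G))>\tfrac1{36}$. I would do this by exhibiting the separating direction $4v+\e_1$ and proving $\inner{g,4v+\e_1}\ge1$ for all $g\in G$: indeed $\inner{v,4v+\e_1}=4$, and for $g=\nabla\widetilde g(u)$, writing $p=\inner{v,\hat u}$, $q=\inner{\e_1,\hat u}$ (so $p^2+q^2\le1$), the displayed formula gives
\[
\inner{g,4v+\e_1}=2s^2(p-q)(4p+q)+1+3s^2 ,
\]
and $2(p-q)(4p+q)=8p^2-6pq-2q^2\ge-3(p^2+q^2)\ge-3$ because $11p^2-6pq+q^2$ has nonpositive discriminant, hence is a nonnegative quadratic form; so the right-hand side is $\ge1$. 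Passing to convex combinations, every $g\in\conv(G)\supseteq\partial_\delta h(x)$ satisfies $\norm{g}\ge\inner{g,(4v+\e_1)/\sqrt{17}}\ge1/\sqrt{17}>1/36$, so no point is $(\delta,\tfrac1{36})$-stationary.

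The main obstacle is exactly this last step: since $\delta$ may be arbitrarily large one cannot localize, and must instead control the \emph{entire} gradient range $\conv(G)$ of $h$. The orthogonality of $v$ to $\mathrm{span}\{\e_1,x_1,\dots,x_T\}$ is what makes this tractable, collapsing all the tiny perturbation balls into the single model function $\widetilde g$ whose gradients form the explicit two-parameter family above; finding a separating hyperplane then reduces to checking nonnegativity of a binary quadratic form. The remaining bookkeeping (continuity at the spheres, disjointness of the balls, tracking constants) is routine.
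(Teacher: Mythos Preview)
Your proof is correct, and the setup mirrors the paper's (same gradient computation, same use of $v\perp x_i$ to collapse all the balls into a single translated model), but the decisive non-stationarity step is handled by a genuinely different and cleaner mechanism. The paper parametrizes what it asserts is the full convex hull of gradients as the two-parameter family $\{\lambda_1 v+\lambda_2\nabla g_{\vec 0}(x):\lambda_1+\lambda_2=1,\ \|x\|\le r\}$, and for a generic element $u$ of that family bounds $u^\top v$ and $u^\top(v+\e_1)$ separately, then combines them through an inequality involving $\sqrt{\lambda_2\,u^\top v}$ to reach $\|u\|>1/(\sqrt{2}+4)^2>1/36$. You instead exhibit a single separating hyperplane with normal $4v+\e_1$ and verify the \emph{linear} inequality $\langle g,4v+\e_1\rangle\ge 1$ directly on each gradient via the quadratic form $11p^2-6pq+q^2\ge 0$; linearity makes the passage to convex combinations automatic, so you never need to describe $\conv(G)$ explicitly. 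This buys you a sharper constant ($1/\sqrt{17}$ instead of $1/36$, and Lipschitz constant $2\sqrt{2}+1$ instead of $7$), and it also sidesteps a subtle point in the paper's route---namely, justifying that an arbitrary convex combination of several gradients $\nabla g_{\vec 0}(x^{(1)}),\dots,\nabla g_{\vec 0}(x^{(m)})$ can always be rewritten in the form $\lambda_1 v+\lambda_2\nabla g_{\vec 0}(x)$ for a \emph{single} $x$.
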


\begin{proof}
We start by noting that $h$ is continuous, since for any $z$ and $(y_n)_{n=1}^{\infty}\subset B_r(z),\,y_n\overset{n\to\infty}{\longrightarrow}y$ such that $\norm{y-z}=r$ we have
\begin{align*}
\lim_{n\to\infty}h(y_n)
&=\lim_{n\to\infty}g_z(y_n)
\\&=\lim_{n\to\infty}\left(\min\{\norm{y_n-z}^2/r^2,1\}v^\top y_n+(1-\min\{\norm{y_n-z}^2/r^2,1\})\e_1^\top(y_n-z)\right)
\\&=\lim_{n\to\infty}\left(\frac{\norm{y_n-z}^2}{r^2}\cdot v^\top y_n+\left(1-\frac{\norm{y_n-z}^2}{r^2}\right)\e_1^\top(y_n-z)\right)
\\&=v^\top y
~.
\end{align*}
Having established continuity, since $x\mapsto v^{\top}x$ is clearly $1$-Lipschitz (in particular $7$-Lipschitz), in order to prove Lipschitzness of $h$ it is enough to show that $g_{x_i}(x)$ is $7$-Lipschitz in $\norm{x-x_i}<r$ for any $x_i$. For any such $x,x_i$ we have
\begin{align}
\nabla g_{x_i}(x)
&~=~\frac{2v^\top x}{r^2}(x-x_i)+\frac{\norm{x-x_i}^2}{r^2}v
-\frac{2\e_1^\top(x-x_i)}{r^2}(x-x_i)
-\frac{\norm{x-x_i}^2}{r^2}\e_1
+\e_1 \nonumber
\\
&\overset{v\perp x_i}{=}~
\frac{2v^\top (x-x_i)}{r^2}(x-x_i)+\frac{\norm{x-x_i}^2}{r^2}v
-\frac{2\e_1^\top(x-x_i)}{r^2}(x-x_i)
-\frac{\norm{x-x_i}^2}{r^2}\e_1
+\e_1
~,
\label{eq: nabla g}
\end{align}
hence
\begin{align*}
\norm{\nabla g_{x_i}(x)}
&=\norm{\frac{2v^\top (x-x_i)}{r^2}(x-x_i)+\frac{\norm{x-x_i}^2}{r^2}v-\frac{2\e_1^\top(x-x_i)}{r^2}(x-x_i)
-\frac{\norm{x-x_i}^2}{r^2}\e_1+\e_1}
\\
&\leq\frac{2\norm{v}\cdot \norm{x-x_i}^2}{r^2}+\frac{\norm{x-x_i}^2}{r^2}\norm{v}
+\frac{2\norm{\e_1}\cdot\norm{x-x_i}^2}{r^2}
+\frac{\norm{x-x_i}^2}{r^2}\norm{\e_1}
+\norm{\e_1}
\\
&\leq2+1+2+1+1=7~,
\end{align*}
which proves the desired Lipschitz bound.
The fact that for any $i\in[T]:h(x_i)=0,\,\nabla h(x_i)=\mathbf{e_1}$ is easily verified by construction and by \eqref{eq: nabla g}.
In order to finish the proof, we need to show that $h$ has no $(\delta,\frac{1}{36})$ stationary-points. 
By construction we have
\[
\partial h(x)=
\begin{cases}
v\,,&\forall i\in[T]:\norm{x-x_i}>r
\\
\nabla g_{x_i}(x)\,,&\exists i\in[T]:\norm{x-x_i}< r
\end{cases}
~,
\]
while for $\norm{x-x_i}=r$ we would get convex combinations of the two cases.\footnote{Since we are interested in analyzing the $\delta$-subdifferential set which consists of convex combinations of subgradients, and subgradients are defined as convex combinations of gradients at differentiable points - we do not lose anything by considering convex combinations gradients at differentiable points in the first place.}
Inspecting the set $\{\nabla g_{x_i}(x):\norm{x-x_i}<r\}$ through \eqref{eq: nabla g}, we see that it depends on $x,x_i$ only through $x-x_i$ and that actually
\[\{\nabla g_{x_i}(x):\norm{x-x_i}<r\}=\{\nabla g_{\vec{0}}(x):\norm{x}<r\}~,
\]
which is convenient since the latter set does not depend on $x_i$. Overall, we see that any gradient of $h$ is in the set
\begin{align*}
\mathcal{G}:=&\left\{
\lambda_1 v+ \lambda _2\left(\frac{2v^\top x}{r^2}x+\frac{\norm{x}^2}{r^2}v
-\frac{2\e_1^\top x}{r^2}x
-\frac{\norm{x}^2}{r^2}\e_1
+\e_1\right)
~:~\lambda_1,\lambda_2\geq0,\lambda_1+\lambda_2=1,\norm{x}\leq r
\right\}
\\
=&\left\{
\lambda_1 v+ \lambda _2\left({2v^\top x}\cdot x+{\norm{x}^2}\cdot v
-{2\e_1^\top x}\cdot x
-{\norm{x}^2}\cdot\e_1
+\e_1\right)
~:~\lambda_1,\lambda_2\geq0,\lambda_1+\lambda_2=1,\norm{x}\leq 1
\right\}
\\
=&\left\{
(\lambda_1+\lambda_2\norm{x}^2)\cdot v
+2\lambda_2((v-\e_1)^\top x)\cdot x
+\lambda_2(1-\norm{x}^2)\cdot \e_1
~:~\lambda_1,\lambda_2\geq0,\lambda_1+\lambda_2=1,\norm{x}\leq 1
\right\}~.
\end{align*}
We aim to show that $\mathrm{conv}(\mathcal{G})$ does not contain any vectors of norm smaller than $\frac{1}{36}$. For $u\in\mathcal{G}$ with corresponding $\lambda_1,\lambda_2,x$ as above, it holds that
\begin{align} \label{eq: uv}
u^\top v
&=\lambda_{1}+\lambda_{2}\norm{x}^2+2\lambda_2(v-e_1)^\top x\cdot x^\top v 
\nonumber\\
&=\lambda_{1}+\lambda_{2}\norm{x}^2+2\lambda_2 (v^\top x)^2-2\lambda_2e_1^\top x\cdot x^\top v
\nonumber\\
&\geq \lambda_{1}+\lambda_{2}(v^\top x)^2+\lambda_{2}(e_1^\top x)^2
+2\lambda_2 (v^\top x)^2-2\lambda_2e_1^\top x\cdot x^\top v
\nonumber\\
&=\lambda_{1}+\lambda_2(v^\top x- e_1^\top x)^2
+2\lambda_2 (v^\top x)^2
\nonumber\\
&\geq \lambda_{1}+\lambda_2(v^\top x- e_1^\top x)^2
\nonumber\\
&\geq \lambda_2(v^\top x- e_1^\top x)^2
~.
\end{align}
So for $\xi\in\conv(\mathcal{G})$ represented as the convex combination $\xi=\sum_{i=1}^{N}\mu^i u^i$, with each $u^i\in\mathcal{G}$ having its corresponding $\lambda_1^i,\lambda_2^i,x^i$, we get

\begin{align} \label{eq: uv summed}
\xi^\top v&=\sum_{i=1}^{N}\mu^i (u^i)^\top v
\overset{(\ref{eq: uv})}{\geq}
\sum_{i=1}^{N}\mu^i \lambda_2^i((v-e_1)^\top x^i)^2
\overset{\text{Cauchy-Schwarz}}{\geq} \frac{\left(\sum_{i=1}^{N}\mu^i \lambda_2^i|(v-e_1)^\top x^i|\right)^2}{\sum_{i=1}^{N}\mu^i\lambda_2^i}
\nonumber
\\
&\geq
\left(\sum_{i=1}^{N}\mu^i \lambda_2^i|(v-e_1)^\top x^i|\right)^2
~,
\end{align}
where in the last inequality we used $\sum_{i=1}^N\mu^i\lambda_2^i\leq \max_{i}\lambda_2^i\cdot\sum_i\mu^i\leq1$. 
This further gives
\begin{align*}
\xi^\top(e_1+v)
&=\sum_{i=1}^{N}\mu^i (e_1+v)^\top u^i
\\&=\sum_{i=1}^{N}\mu^i [1+2\lambda^i_2(v^\top x^i-e_1^\top x^i)(e_1^\top x^i+v^\top x^i)]
\\
&=1+2\sum_{i=1}^{N}\mu^i \lambda^i_2((v-e_1)^\top x^i)((v+e_1)^\top x^i)
\\
&\overset{\|x^i\|\leq 1}{\geq}
1-4\sum_{i=1}^{N}\mu^i \lambda^i_2|(v-e_1)^\top x^i|
\\
&\overset{(\ref{eq: uv summed})}{\geq} 1-4\sqrt{\xi^\top v}
~.
\end{align*}
Hence,
given $\xi\in\conv(\mathcal{G})$ we can assume that $\|\xi\|\leq 1$ (since otherwise there is nothing left to show), and see that
\begin{gather*}
1
\leq |\xi^\top (e_1+v)|+4\sqrt{\lambda_2 \xi^\top v}
\leq \norm{\xi}\cdot \norm{e_1+v}+4\sqrt{\norm{\xi}}
\leq \sqrt{2}\norm{\xi}+4\sqrt{\norm{\xi}}
\overset{\norm{\xi}<1}{\leq} \sqrt{2\norm{\xi}}+4\sqrt{\norm{\xi}}
\\
\implies
\norm{\xi}\geq \frac{1}{(\sqrt{2}+4)^2}
>\frac{1}{36}~.
\end{gather*}

\end{proof}

Given the previous lemma we can easily finish the proof of the theorem by looking at
\[
f(x):=\frac{1}{7}\max\{h(x),-1\}~.
\]
$f$ is $1$-Lipschitz (since $h$ is $7$-Lipschitz), and satisfies $f(x_1)-\inf_{x}f(x)=0-(-\frac{1}{7})<1$. Furthermore, for any $i\in[T]: h(x_i)=0>-1\implies f(x_i)=\frac{1}{7}h(x_i)=0$, which by the fact that $h$ is $7$-Lipschitz implies that for any $x\in B_{\frac{1}{7}}(x_i): h(x)>-1\implies \partial f(x)=\frac{1}{7}\partial h(x)$. In particular, 
$\partial_{\frac{1}{7}} f(x_i)=\frac{1}{7}\partial_{\frac{1}{7}} h(x_i)$
so we conclude using the lemma shows that no $x_i$ is a $(\frac{1}{7},\frac{1}{7}\cdot\frac{1}{36})=(\frac{1}{7},\frac{1}{252})$ stationary point of $f$.

\subsection{Proof of \thmref{thm: deterministic upper}}

The proof is based on a slight modification of the $\INGD$ algorithm of \citet{zhang2020complexity}. To give some background, this algorithm is based on a gradient-descent-like procedure, where in each iteration, given current point $x_t$, it uses a certain element $g_t\in \partial_{\delta}f(x_t)$, until it finds a $g_t$ such that $\norm{g_t}\leq \epsilon$. If $g_t$ does not satisfy this condition, it is ensured that the function value decreases by at least $\delta\epsilon/4$, which can happen at most $4\Delta/\delta\epsilon$ times. $g_t$ itself is produced by repeatedly searching in various directions away from $x_t$, and maintaining a convex combination of elements in $\partial_{\delta}f(x)$, until either a sufficient decrease is found or the norm of the convex combination becomes sufficiently small. 

In the original $\INGD$ algorithm, a given direction is searched by picking a point at random in an interval starting from $x_t$. If there is no sufficient decrease in that direction, the fundamental theorem of calculus implies that the derivative along that direction is small on average, hence picking a point at random is sufficient. In our case, we de-randomize this part of the algorithm, by replacing it with a deterministic binary search (following \citet{davis2021gradient}). 

More formally, we replace the $\INGD$ algorithm with Algorithms \ref{alg: INGD},\ref{alg: min norm},\ref{alg: binary search} below, where the change is in calling the binary search subroutine (Algorithm \ref{alg: binary search}) instead of picking a point at random. Inspecting Algorithm \ref{alg: min norm}, we see that it maintains a convex combination of elements in $\partial_{\delta}f(x)$, and calls the subroutine $\mathrm{BinarySearch}(x,g_k)$ only whenever $\norm{g_k}>\epsilon$ and $f(x)-f(x-\frac{\delta}{\norm{g_k}}g_k)\leq\frac{\delta}{4}\norm{g_k}$. That being the case, the fundamental theorem of calculus ensures
\begin{align*}
\frac{1}{4}\norm{g_k}^2
\geq \frac{\norm{g_k}}{\delta}\left({f(x)-f\left(x-\frac{\delta}{\norm{g_k}}g_k\right)}\right)
=\frac{1}{\delta}\int_{0}^{\delta}\inner{\nabla f\left(x-\frac{t}{\norm{g_k}}g_k\right),g_k}dt
~.
\end{align*}
Namely, on average along the segment $[x,x-\frac{\delta}{\norm{g_k}}g_k]$, points have gradients whose dot product with $g_k$ is small.
Accordingly, using the fact the $\nabla f$ is $H$-Lipschitz,
$\mathrm{BinarySearch}(x,g_k)$ performs a binary search along this segment and
produces a point $y_k$ such that $\langle\nabla f(y_k),g_k\rangle\leq \frac{1}{2}\norm{g_k}^2$
within $O(\log(\delta H/\norm{g_k}))=O(\log(\delta H/\epsilon))$ first-order oracle calls \citep[Lemma 3.9]{davis2021gradient}.
This is the only modification to $\INGD$ which is introduced to Algorithm \ref{alg: INGD}, when compared to \citep{zhang2020complexity,davis2021gradient}.
Accordingly, $\mathrm{MinNorm}$ finds an approximate minimal norm element of $\partial_{\delta}f(x_t)$ within $O\left(\frac{L^2\log(\delta H/\epsilon)}{\epsilon^2}\right)$ first-order oracle calls \citep[Corollary 2.5]{davis2021gradient}. Since we claimed that this can happen at most $O(\Delta/\delta\epsilon)$ times,
% Taking a $\delta$-step in such a direction decreases the function value by at least $\Omega(\delta\epsilon)$ (see \citep[Proof of Theorem 8]{zhang2020complexity}), implying that $\INGD$ must terminate within $T=O(\Delta/\delta\epsilon)$ steps.
% Overall,
overall this produces a $(\delta,\epsilon)$-stationary point of $f$ within $O\left(\frac{\Delta L^2\log(\delta H/\epsilon)}{\delta\epsilon^3}\right)$ oracle calls.

\begin{algorithm}
\caption{$\INGD(x_1,T)$}\label{alg: INGD}
\begin{algorithmic}
\Require Initialization $x_1\in\reals^d$, iteration budget $T\in\NN$.
\For{$t=1,\dots,T-1$}
\State $g_t\gets \mathrm{MinNorm}(x_t)$ \Comment{Complexity $O(\frac{L^2\log(\delta H/\epsilon)}{\epsilon^2})$}
\If{$\norm{g_t}\leq \epsilon$}
\State \Return $x_t$
\Else
\State $x_{t+1}\gets x_t-\frac{\delta}{\norm{g_t}}g_t$
\EndIf
\EndFor
\State \Return $x_T$
\end{algorithmic}
\end{algorithm}

\begin{algorithm}
\caption{$\mathrm{MinNorm}(x)$}\label{alg: min norm}
\begin{algorithmic}
\Require $x\in\reals^d,~\delta,\epsilon>0$.
% \Ensure
\State $k \gets 0$
\State $g_0 \gets \nabla f(x)$
\While{$\norm{g_k}>\epsilon$ and $f(x)-f(x-\frac{\delta}{\norm{g_k}}g_k)\leq\frac{\delta}{4}\norm{g_k}$}
\State $y_k\gets \mathrm{BinarySearch}(x,g_k)$ \Comment{Finds $y_k\in[x,x-\frac{\delta}{\norm{g_k}}g_k]:
\langle\nabla f(y_k),g_k\rangle\leq \frac{1}{2}\norm{g_k}^2$}
\State $g_{k+1}\gets \arg\min_{\lambda\in[0,1]}\norm{\lambda g_k+(1-\lambda)\nabla f(y_k)}$
\State $k\gets k+1$
\EndWhile
\State \Return $g_k$
\end{algorithmic}
\end{algorithm}

\begin{algorithm}
\caption{$\mathrm{BinarySearch}(x,g)$}\label{alg: binary search}
\begin{algorithmic}
\Require  $x,g\in\reals^d,~\delta,H>0$.
\State $(a,b)\gets (0,1)$
\State $\bar{g}\gets \frac{\delta}{\norm{g}}g$
\While{$b-a>\frac{\norm{g}}{8\delta H}$}
\If{$f(x-\frac{a+b}{2}\cdot\bar{g})\geq\frac{1}{2}(f(x-a\bar{g})+f(x-b\bar{g}))$}
\Comment{$\E_{t\in[a,\frac{a+b}{2}]}\langle \nabla f(z-t\bar{g}),g\rangle\leq \E_{t\in[\frac{a+b}{2},b]}\langle \nabla f(z-t\bar{g}),g\rangle$}
\State $(a,b)\gets (a,\frac{a+b}{2})$
\Else
\Comment{$\E_{t\in[a,\frac{a+b}{2}]}\langle \nabla f(z-t\bar{g}),g\rangle> \E_{t\in[\frac{a+b}{2},b]}\langle \nabla f(z-t\bar{g}),g\rangle$}
    \State $(a,b)\gets (\frac{a+b}{2},b)$
\EndIf
\EndWhile
\State \Return $x-a\bar{g}$
\end{algorithmic}
\end{algorithm}

\subsection{Proof of \thmref{thm:delta,epsilon eps-lower bound}}

We derive the lower bound using the so-called ``zero chain'' technique \citep{carmon2020lowerI,woodworth2017lower}. While a full discussion of this technique can be found in \citep{carmon2020lowerI,carmon2020complexity}, we will only state the relevant definitions and propositions we will use in our proof. 

\begin{definition}
For $x\in\reals^d,$ we define its $\alpha$-progress as $\prog_\alpha(x):=\max\{i\in[d]:|x_i|>\alpha\}$.
\end{definition}

\begin{definition}
A function $f:\reals^d\to\reals$ is an $\alpha$-robust zero-chain if for every $x\in\reals^d$:
\[
\prog_{\alpha}(x)<i
\implies f(y)=f(y_1,\dots,y_i,0,\dots,0)
~~\textit{for~all~y~in~a~neighborhood~of~x.}
\]
\end{definition}

\begin{proposition}[\cite{carmon2020complexity}, Proposition 2.3] \label{prop: Carmon's prog}
Let $R,\alpha>0,\,T\in\NN$ and $d'\geq\lceil T+\frac{2R^2}{\alpha^2}\log(3T^2)\rceil$. Let $f:\reals^T\to\reals$ be an $\alpha$-robust zero-chain, let $U\in\reals^{d'\times T}$ be a uniformly random orthogonal matrix and denote $f_U(x)=f(U^T x)$. If $\Acal$ is a (possibly randomized) algorithm interacting with $f_U$ such that its iterates satisfy $\norm{x^{(t)}}\leq R$ for all $t$ with probability 1, then with probability at least $\frac{2}{3}$ over the draw of $U$:
\[
\prog_{\alpha}(U^T x^{(t)})< t ~~\textit{for~all~}t\leq T.
\]
\end{proposition}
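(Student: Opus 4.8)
Write $U=[u_1\mid\cdots\mid u_T]$ for the columns of $U$, so that $(U^Tx)_i=\inner{u_i,x}$ and the conclusion ``$\prog_\alpha(U^Tx^{(t)})<t$ for all $t\le T$'' is literally the statement that $|\inner{u_i,x^{(t)}}|\le\alpha$ for all $1\le t\le i\le T$. The plan is to show that the complementary ``bad event'' $B:=\bigcup_{1\le t\le i\le T}\{|\inner{u_i,x^{(t)}}|>\alpha\}$ has probability at most $\tfrac13$ over the draw of $U$. Since a randomized algorithm is a mixture of deterministic ones and the claimed guarantee is a probability over $U$ (jointly with the algorithm's coins), I first condition on the internal randomness and assume $\Acal$ is deterministic; then the $t$-th query $x^{(t)}$ is a fixed function of the first $t-1$ oracle responses.

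The heart of the argument is a decoupling step exploiting the zero-chain structure: up until the first time the progress bound is violated, the interaction transcript should depend only on the first few columns of $U$. To make this precise I introduce an auxiliary iterate sequence $(\hat x^{(t)})$ obtained by running $\Acal$ against a \emph{truncated} oracle which, on its $s$-th query $y$, returns the value and Clarke subdifferential at $y$ of $\phi_s(x):=f\big(\inner{u_1,x},\dots,\inner{u_s,x},0,\dots,0\big)$ (recall that both the value and the Clarke subdifferential at a point are determined by the germ of the function there, which is exactly what the ``neighborhood'' in the zero-chain definition controls). Because $\phi_s$ depends only on $u_1,\dots,u_s$, the $s$-th truncated response is a function of $y$ and $u_1,\dots,u_s$, so by induction $\hat x^{(t)}$ is a function of $u_1,\dots,u_{t-1}$ alone. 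Next I prove by induction on $s$ that if $\prog_\alpha(U^Tx^{(s')})<s'$ for all $s'\le s-1$ then $\hat x^{(s)}=x^{(s)}$: assuming $\hat x^{(s')}=x^{(s')}$ for $s'\le s-1$, the hypothesis $\prog_\alpha(U^Tx^{(s')})<s'$ together with the $\alpha$-robust zero-chain property (applied with $i=s'$) forces $f_U$ and $\phi_{s'}$ to coincide on a neighborhood of $x^{(s')}$, so the true and truncated oracles return identical responses there, whence $\hat x^{(s)}=x^{(s)}$. Letting $T'$ be the least index with $\prog_\alpha(U^Tx^{(T')})\ge T'$, on the event $\{T'\le T\}$ all earlier progress bounds hold and hence $x^{(T')}=\hat x^{(T')}$; since $\prog_\alpha(U^T\hat x^{(T')})\ge T'$ means $|\inner{u_i,\hat x^{(T')}}|>\alpha$ for some $i\ge T'$, we conclude $B\subseteq\widehat B:=\bigcup_{1\le t\le i\le T}\{|\inner{u_i,\hat x^{(t)}}|>\alpha\}$.

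It remains to bound $\Pr[\widehat B]$. Fix a pair $t\le i$ and condition on $u_1,\dots,u_{t-1}$; this determines $\hat x^{(t)}$, which satisfies $\norm{\hat x^{(t)}}\le R$ by the iterate-boundedness hypothesis. By the invariance of the Haar measure on orthonormal frames, conditionally on $u_1,\dots,u_{t-1}$ the vector $u_i$ is uniformly distributed on the unit sphere of the $(d'-t+1)$-dimensional subspace $V:=\mathrm{span}\{u_1,\dots,u_{t-1}\}^\perp$; since $u_i\in V$ we have $\inner{u_i,\hat x^{(t)}}=\inner{u_i,\Pi_V\hat x^{(t)}}$ with $\norm{\Pi_V\hat x^{(t)}}\le R$. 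A standard spherical tail estimate for the inner product of a uniformly random unit vector with a fixed vector then gives $\Pr[|\inner{u_i,\hat x^{(t)}}|>\alpha]\le\exp\!\big(-(d'-t)\alpha^2/(2R^2)\big)$ (up to the usual small constants); the assumption $d'\ge\lceil T+\tfrac{2R^2}{\alpha^2}\log(3T^2)\rceil$ — chosen precisely so that $d'-t\ge d'-T$ makes this at most $\tfrac{1}{3T^2}$ — together with a union bound over the at most $T^2$ pairs $(t,i)$ yields $\Pr[\widehat B]\le\tfrac13$, and therefore $\Pr[B]\le\tfrac13$, which is the claim.

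I expect the main obstacle to be the decoupling step in the second paragraph: setting up the truncated oracle and the auxiliary sequence $(\hat x^{(t)})$ cleanly, and carrying out the induction that identifies $\hat x^{(t)}$ with $x^{(t)}$ up to the first violation. The subtlety is the apparent circularity that the event ``progress stays below $\alpha$'' itself involves the later columns $u_t,\dots,u_T$, so one cannot simply condition on it while claiming $x^{(t)}$ ignores those columns; the auxiliary iterates are exactly the device that breaks this circularity, and getting the bookkeeping right (including the measurability of set-valued responses, which is harmless since we may hand the algorithm the entire Clarke subdifferential) is where the care is needed. The concentration estimate and the final union bound are routine.
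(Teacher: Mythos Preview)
The paper does not prove this proposition; it is quoted verbatim from \cite{carmon2020complexity} (Proposition~2.3) and used as a black box in the proof of \thmref{thm:delta,epsilon eps-lower bound}. So there is no ``paper's own proof'' to compare against here.

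That said, your sketch is exactly the standard argument from the cited reference: introduce auxiliary iterates $\hat x^{(t)}$ that depend only on $u_1,\dots,u_{t-1}$ via a truncated oracle, show by induction (using the $\alpha$-robust zero-chain property) that $\hat x^{(t)}=x^{(t)}$ up to the first progress violation so that $B\subseteq\widehat B$, and then bound $\Pr[\widehat B]$ by spherical concentration plus a union bound over at most $T^2$ pairs. The arithmetic in your last paragraph is consistent with the stated dimension requirement. One small point worth making explicit when you write it up: the step ``$\|\hat x^{(t)}\|\le R$ by the iterate-boundedness hypothesis'' needs the boundedness to be a structural property of $\Acal$ (it never queries outside $B_R(0)$ regardless of oracle responses), not merely a property of the interaction with $f_U$; this is the intended reading in the original reference and is what makes the concentration step go through for the auxiliary iterates.
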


Given the proposition, we aim to construct a robust zero chain function which such that all its $(\delta,\epsilon)$-stationary points will have large progress. To that end, for any $T\in\NN,\alpha>0$ we define $f:\reals^{T}\to\reals$ as follows
\begin{equation} \label{eq: Nemirovski func}
f_{T,\alpha}(x):=
\max_{i\in [T]}\{|x_i-1|+3\alpha(T-i)\}    
\end{equation}

\begin{lemma}
For any $T\in\NN,\alpha>0$, $f_{T,\alpha}$ is $1$-Lipschitz, convex and is an $\alpha$-robust zero-chain.
\end{lemma}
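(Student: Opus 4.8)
## Proof Plan

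The plan is to verify the three claimed properties of $f_{T,\alpha}$ in turn, each of which is essentially a direct computation from the definition $f_{T,\alpha}(x)=\max_{i\in[T]}\{|x_i-1|+3\alpha(T-i)\}$.

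\textbf{Lipschitzness and convexity.} First I would note that for each fixed $i$, the map $x\mapsto |x_i-1|+3\alpha(T-i)$ is convex (an affine shift of $x\mapsto|x_i-1|$) and $1$-Lipschitz (since $|x_i-1|$ depends only on one coordinate and $t\mapsto|t-1|$ has Lipschitz constant $1$ with respect to the Euclidean norm on that coordinate). The pointwise maximum of convex functions is convex, and the pointwise maximum of $1$-Lipschitz functions is $1$-Lipschitz, so both properties pass to $f_{T,\alpha}$ immediately.

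\textbf{The zero-chain property.} This is the only part requiring a small argument. Fix $x$ with $\prog_\alpha(x) < i$, i.e.\ $|x_j|\leq\alpha$ for all $j\geq i$. I want to show that in a neighborhood of $x$, the function only depends on coordinates $1,\dots,i$; equivalently, that the maximum defining $f_{T,\alpha}$ is attained (strictly, and hence stably under perturbation) at some index $\leq i$. The key estimate: for any index $j > i$ and any $y$ near $x$ we have $y_j$ close to $x_j$, so $|y_j - 1| \leq 1 + |x_j| + (\text{small}) \lesssim 1 + \alpha$, giving a value $|y_j-1| + 3\alpha(T-j) \lesssim 1+\alpha + 3\alpha(T-i-1)$. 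Meanwhile index $i$ contributes $|y_i - 1| + 3\alpha(T-i) \geq (1 - |x_i| - \text{small}) + 3\alpha(T-i) \geq 1 - \alpha - \text{small} + 3\alpha(T-i)$ — wait, I need this to dominate, so more carefully: the index-$i$ term is at least $3\alpha(T-i) + (1-|x_i|-\text{small})$ while the index-$j$ term for $j>i$ is at most $3\alpha(T-j) + (1 + |x_j| + \text{small}) \leq 3\alpha(T-i-1) + 1 + \alpha + \text{small} = 3\alpha(T-i) + 1 + \alpha - 3\alpha + \text{small}$. Since $|x_i|\leq\alpha$ as well (as $i > \prog_\alpha(x)$ forces... actually $i-1 \geq \prog_\alpha(x)$, need to check whether $i$ itself is covered) — the cleanest route is: $\prog_\alpha(x)<i$ means $|x_j|\leq\alpha$ for all $j\geq i$, so the index-$i$ term is at least $3\alpha(T-i) + 1 - \alpha - \text{small}$, and for $j>i$ the index-$j$ term is at most $3\alpha(T-i) - 3\alpha + 1 + \alpha + \text{small} = 3\alpha(T-i) + 1 - 2\alpha + \text{small}$, which is strictly smaller once the perturbation is small enough. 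Hence the max over $\{i, \dots, T\}$ is attained within $\{1,\dots,i\}$ throughout a neighborhood, so $f_{T,\alpha}(y) = f_{T,\alpha}(y_1,\dots,y_i,0,\dots,0)$ there, as the coordinates $y_{i+1},\dots,y_T$ never participate in the active index.

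\textbf{Anticipated obstacle.} None of this is deep; the only place to be careful is bookkeeping the constants and the direction of the strict inequality in the zero-chain argument — specifically making sure the $3\alpha$ gap between consecutive index contributions strictly beats the worst-case $2\alpha$-ish slack coming from $|x_j|\leq\alpha$ plus a shrinking perturbation, so that a genuine \emph{open} neighborhood works. The factor $3$ in $3\alpha(T-i)$ is precisely what provides this margin, so I would make sure the inequality chain exhibits where the surplus $\alpha$ comes from.
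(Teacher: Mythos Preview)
Your proposal is correct and follows essentially the same route as the paper: Lipschitzness and convexity are dispatched as pointwise maxima of $1$-Lipschitz convex functions, and the zero-chain property is established by directly comparing the index-$i$ term against any index-$j$ term with $j>i$, using $|x_j|\leq\alpha$ for $j\geq i$ together with the $3\alpha$ increment to force the latter to be dominated. Your version is in fact slightly more careful with the constants---you work in a sufficiently small neighborhood so that the strict gap $\alpha - 2\cdot(\text{perturbation})>0$ is explicit, whereas the paper takes the full $B_\alpha(x)$ ball and its chain of inequalities contains a minor slip (the step $1-\alpha \leq |x_i-1|-\alpha$ would require $|x_i-1|\geq 1$, which need not hold).
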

The lemma and its proof are similar to \citep[Lemma 23]{carmon2020acceleration}, though we provide a proof for completeness.

\begin{proof}
Let $T\in\NN,\alpha>0$ and abbreviate $f=f_{T,\alpha}$. The Lipschitz property is easily seen since is a composition of $1$-Lipschitz functions. Convexity follows from the fact that the maximum of convex functions is itself convex. To prove that $f$ is an $\alpha$-robust zero-chain, let $x\in\reals^T$ be such that $\prog_{\alpha}(x)<i$, and $y\in B_{\alpha}(x)$. Note that $|x_i|\leq \alpha$ and that
$\forall j:|x_j-y_j|<\alpha$. So for any $i+1\leq j\leq T$ we get 
\begin{align*}
|y_j-1|+3\alpha(T-j)
&\leq |x_j-1|+3\alpha(T-j)+\alpha
\leq 1+2\alpha+3\alpha(T-j)
\leq 1+2\alpha+3\alpha(T-i-1)
\\
&=1-\alpha+3\alpha(T-i)
\leq |x_i-1|-\alpha+3\alpha(T-i)
\leq |y_i-1|+3\alpha(T-i)
~.
\end{align*}
By \eqref{eq: Nemirovski func} this shows $f$ is an $\alpha$-robust zero-chain by definition.
\end{proof}

From now on we fix some $\epsilon<1,\,\delta\leq\frac{1}{12\epsilon}$
and set $T=\frac{1}{4\epsilon^2},~\alpha=\frac{1}{9T}$. The following lemma provides a crucial property of the $(\delta,\epsilon)$-stationary point of $f_{T,\alpha}$.

\begin{lemma} \label{lemma: delta,epsilon progress}
Let $x$ be a $(\delta,\epsilon)$-stationary point of $f_{T,\alpha}$. Then there exists $I\subset[T],|I|\geq\frac{3T}{4}$ such that $\forall i\in I: x_i\geq\frac{1}{3}$. In particular $\prog_{\alpha}(x)\geq\frac{3T}{4}$.
% Also \norm{x} \lesssim \sqrt{T}\cdot\sqrt{T}.

% $\forall i\in[T]:~\frac{1}{3}\leq x_i\leq \frac{5}{3}$. In particular $\frac{1}{3}\sqrt{T}\leq\norm{x}\leq\frac{5}{3}\sqrt{T}$ and $\prog_{\alpha}(x)=T$.
\end{lemma}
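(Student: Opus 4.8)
The plan is to work directly with the structure of the Clarke subdifferential of the coordinatewise max $f=f_{T,\alpha}$ and to exploit that we live in only $T$ dimensions. First unpack the hypothesis: if $x$ is $(\delta,\epsilon)$-stationary there is $g\in\partial_\delta f(x)$ with $\norm{g}\le\epsilon$, and by definition of $\partial_\delta$ as a convex hull (Carath\'eodory) we may write $g=\sum_{k}\lambda_k g_k$ with $\lambda_k\ge0$, $\sum_k\lambda_k=1$, $z_k\in B_\delta(x)$ and $g_k\in\partial f(z_k)$. Writing $\phi_i(y):=|y_i-1|+3\alpha(T-i)$ and $A(z):=\{i:\phi_i(z)=f(z)\}$ for the active set, the subdifferential of the max is $\partial f(z)=\conv\{\sign(z_i-1)\e_i:i\in A(z)\}$ (with $[-1,1]\e_i$ in the degenerate case $z_i=1$). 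Hence each $g_k=\sum_i\beta_{k,i}\e_i$ is supported on $A(z_k)$, satisfies $\sum_i|\beta_{k,i}|\le1$, and $\sign(\beta_{k,i})=\sign((z_k)_i-1)$; moreover, for $i\in A(z)$ one has the exact identity $|z_i-1|=f(z)-3\alpha(T-i)$, so active coordinates are pushed away from $1$ by at least $f(z)-3\alpha(T-1)$.

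The two quantitative inputs are: (i) since $g$ is supported on $\bigcup_k A(z_k)\subseteq[T]$, Cauchy--Schwarz gives $\norm{g}_1\le\sqrt{T}\,\norm{g}_2\le\sqrt{T}\,\epsilon=\tfrac12$ by the choice $T=1/(4\epsilon^2)$; and (ii) since $\delta\le\tfrac1{12\epsilon}$ we have $\delta^2\le T/36$, so a single $z_k$ can move at most $\tfrac94\delta^2< T/16$ coordinates by more than $\tfrac23$ away from $x$. I would then suppose for contradiction that the set $B:=\{i\in[T]:x_i<\tfrac13\}$ has $|B|>T/4$ and derive $\norm{g}_1>\tfrac12$. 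The point is that for $i\in B$, crossing from $x_i<\tfrac13$ to $(z_k)_i>1$ requires a coordinate move of size $>\tfrac23$, so by (ii) all but at most $T/16$ of the indices in $B$ satisfy $(z_k)_i<1$ at every $z_k$; for those indices $\beta_{k,i}\le0$, so they contribute only negative mass to $g_i=\sum_k\lambda_k\beta_{k,i}$. Summing $|g_i|$ over $i\in B$, controlling the few ``flipped'' coordinates with the budget $\sum_i|\beta_{k,i}|\le1$ and with the active-set identity from step one, one should obtain $\sum_{i\in B}|g_i|>\tfrac12$, contradicting (i). This gives $|B|\le T/4$, hence $I:=[T]\setminus B$ works; and since every $i\in I$ has $x_i\ge\tfrac13>\alpha=\tfrac1{9T}$ while $|I|\ge\tfrac34 T$ forces $\max I\ge\tfrac34 T$, the progress bound $\prog_\alpha(x)\ge\tfrac34 T$ is immediate.

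The delicate point — and what I expect to be the main obstacle — is making the accounting of the previous paragraph quantitatively tight in the regime $\delta=\Theta(1/\epsilon)=\Theta(\sqrt T)$ permitted by $\delta\le\tfrac1{12\epsilon}$, where a single $z_k$ can already flip a constant fraction of $T/36$ coordinates, so $g$ can pick up a non-negligible amount of positive (cancelling) mass that has to be bounded. This forces one to use the per-coordinate identity $|(z_k)_i-1|=f(z_k)-3\alpha(T-i)$ rather than only the crude bound $|(z_k)_i-1|\ge f(z_k)-\tfrac13$, and it is here that the precise constants line up: $T=1/(4\epsilon^2)$ makes $\norm{g}_1\le\tfrac12$, $\alpha=1/(9T)$ makes the smallest relevant index gap $3\alpha\cdot\tfrac14 T=\tfrac1{12}$, and $\delta\le\tfrac1{12\epsilon}$ makes $\delta^2\le T/36$, so that the negative mass forced by the $|B|>T/4$ coordinates strictly dominates the positive mass $g$ can accumulate. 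A preliminary step that helps is to first invoke convexity of $f$ — namely $\inner{g_k,z_k-\1}\ge f(z_k)-\inf f$ with $\inf f=3\alpha(T-1)$ attained at $\1=(1,\dots,1)$, together with the Lipschitz bound $|f(z_k)-f(x)|\le\delta$ and $\inner{g,x-\1}\le\norm{g}_1\norm{x-\1}_\infty\le\tfrac12 f(x)$ — to pin down $f(x)\le 2\inf f+O(\delta)$ before running the coordinate count.
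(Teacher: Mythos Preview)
Your aggregate $\ell_1$-accounting has a structural gap: there is no mechanism that forces $g$ to carry any mass on $B=\{i:x_i<\tfrac13\}$, so the implication ``$|B|>T/4\Rightarrow\sum_{i\in B}|g_i|>\tfrac12$'' cannot be obtained from your ingredients. The coefficients $\beta_{k,i}$ are nonzero only for $i\in A(z_k)$, and the active set is determined by which $\phi_i(z_k)=|(z_k)_i-1|+3\alpha(T-i)$ is maximal --- a condition that has no a priori relation to whether $x_i<\tfrac13$. In particular each $g_k$ may be supported entirely outside $B$, in which case $\sum_{i\in B}|g_i|=0$ regardless of signs. Your step (ii) only says that, for each fixed $k$, at most $T/16$ coordinates in $B$ can have $(z_k)_i\ge1$; it says nothing about whether those (or any) $B$-coordinates are \emph{active}, and even when they are, a single $g_k$ may put all of its unit $\ell_1$-mass on one flipped coordinate, so the ``positive mass'' is not small. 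The statement ``all but at most $T/16$ of the indices in $B$ satisfy $(z_k)_i<1$ at every $z_k$'' is also not what (ii) yields: the exceptional set depends on $k$, and Carath\'eodory gives you up to $T+1$ different $z_k$'s. Finally, your convexity step does give $f(x)\le 2\inf f+O(\delta)$, hence $|x_i-1|\le f(x)\le \tfrac23+O(\delta)$ for every $i$; this is decisive when $\delta$ is small but vacuous precisely in the regime $\delta=\Theta(\sqrt T)$ you correctly flag as the obstacle.

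The paper's proof avoids all of this by extracting a \emph{single witness point} rather than aggregating. It argues that if every generalized gradient appearing in $\partial_\delta f(x)$ is a convex combination of vectors $\pm\e_i$ with a \emph{fixed} sign per coordinate, then any $g\in\partial_\delta f(x)$ has $\|g\|_1=1$ and hence $\|g\|_2\ge1/\sqrt{T}=2\epsilon$, contradicting $(\delta,\epsilon)$-stationarity. The failure of this sign-rigidity furnishes a point $y\in B_\delta(x)$ with some active index $i$ and $y_i=1$; then $f(y)=3\alpha(T-i)$, and the max-structure forces $|y_j-1|\le 3\alpha T\le\tfrac13$ for \emph{every} $j$. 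With all $y_j\in[\tfrac23,\tfrac43]$ in hand, if more than $T/4$ coordinates had $x_j<\tfrac13$ one would get $\|x-y\|>\tfrac13\sqrt{T/4}=\tfrac{1}{12\epsilon}\ge\delta$, a contradiction. The single-witness step is exactly the idea your outline is missing: rather than trying to push $\ell_1$-mass onto $B$, one shows that small $\|g\|$ forces the existence of one $y\in B_\delta(x)$ that is already close to $\mathbf 1$ in \emph{every} coordinate, after which a direct distance comparison finishes.
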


\begin{proof}
Throughout the proof we abbreviate $f=f_{T,\alpha}$. Denote $f_i(x):=|x_i-1|+3\alpha(T-i)$, and note that $f(x)=\max_{i\in[T]}\{f_i(x)\}$. Consequently,
\begin{align*}
\partial f(x)&=\conv\{\partial f_i(x)~:~f_i(x)=f(x)\}
\\
&=\conv\{ g_i(x)~:~f_i(x)=f(x)\}~,
~~g_{i}(x):=\begin{cases}
\{\sign(x_i-1)\mathrm{e}_{i}\}, & x_i\neq 1\\
\{s\cdot\mathrm{e}_{i}:-1\leq s\leq1\}, & x_i=1
\end{cases}
\\
\implies
\partial_{\delta}f(x)&=
\conv\{ g_i(y)~:~f_i(y)=f(y),\,\norm{x-y}<\delta\}~.
\end{align*}
From the representation above we can see that if $x$ satisfies that
\begin{equation} \label{eq:box condition}
    \forall y\in B_{\delta}(x)~\forall i\in[T]
    \textit{~such~that~}f_i(y)=f(y)~
    :~|y_i-1|>0
\end{equation}
then any $g\in\partial_{\delta}f(x)$ is a certain convex combination of vectors of the form $\pm \mathrm{e}_i$ (since in that case the second condition in the definition of $g_i$ is never satisfied).
Furthermore, in this case note that either $\mathrm{e}_i\in\partial_{\delta}f(x)$ or $-\mathrm{e}_i\in\partial_{\delta}f(x)$ but not both, since if both of them are in $\partial_{\delta}f(x)$ then by convexity of $\partial_{\delta}$ there exists some $y_i-1=0$ contradicting \eqref{eq:box condition}.
We will now show that if that is the case, then $x$ is \emph{not} a $(\delta,\epsilon)$-stationary point. Indeed, if $x$ satisfies \eqref{eq:box condition} then for any $g\in\partial f_{\delta}(x)$ there exist $S=\{i_1,\dots,i_{|S|}\}\subseteq[T],\lambda_{i_1},\dots,\lambda_{i_{|S|}}\geq0:\sum_{i\in S}\lambda_i=1$ and $\sigma_{i_1},\dots,\sigma_{i_{|S|}}\in\{\pm1\}$ such that $g=\sum_{i\in S}\lambda_{i}\cdot \sigma_i \mathrm{e}_i$. Hence 
\[
\norm{g}
=\sqrt{\sum_{i\in S}\lambda_i^2}
\overset{\mathrm{Cauchy\,Schwarz}}{\geq}
\left(\sum_{i\in S}\frac{1}{\sqrt{|S|}}\lambda_i\right)
=\frac{1}{\sqrt{|S|}}\geq\frac{1}{\sqrt{T}}=2\epsilon~.
\]
This establishes that if $x$ satisfies \eqref{eq:box condition} then it is indeed not a $(\delta,\epsilon)$-stationary point. So if $x$ is a $(\delta,\epsilon)$-stationary point, we deduce that there exist $y\in B_{\delta}(x),i\in[T]$ such that $f_i(y)=f(y)$ and also $y_i=1$. For this $y$ it holds that $\forall j\in[T]:f_j(y)\leq f(y)=f_i(y)$, hence
\begin{gather*}
|y_j-1|+3\alpha(T-j)
\leq |y_i-1|+3\alpha(T-i)
=3\alpha(T-i)
\\
\implies |y_j-1|\leq 3\alpha(j-i)\leq 3\alpha T\leq\frac{1}{3}~.
\end{gather*}
We see that for all $j\in[T]:\,\frac{2}{3}\leq y_j\leq\frac{4}{3}$,
and recall that $\norm{x-y}<\delta$. Assuming towards contradiction that the lemma is not true, we get $|J|\subset[T],\,|J|\geq\frac{T}{4}$ such that $\forall j\in J:x_j<\frac{1}{3}$ and in particular $|x_j-y_j|>\frac{1}{3}$. But this means that $\norm{x-y}>\frac{1}{3}\cdot\sqrt{\frac{T}{4}}=\frac{\sqrt{T}}{6}=\frac{L}{12\epsilon}\geq\delta$ which is a contradiction.
\end{proof}

Given the previous lemma, finishing up the proof is done by a standard application of the aforementioned lower bound techniques. All we need to do is to apply a random orthogonal transformation on top of our constructed function, and to
handle ``large queries'' by extending our function such that points of large norm do not depend on the orthogonal transformation (see \citep[Section 5]{carmon2020lowerI}). 
We do this by defining
\[
\widetilde{f}_U(x):=
\max\{f_{T,\alpha}(U^{T}x),~2(\norm{x}-2\sqrt{T})\}
~.
\]
On the one hand, if $\norm{x}< 2\sqrt{T}$ then
\[
2(\norm{x}-2\sqrt{T})< 0 \leq f_{T,\alpha}(U^{T}x)
\implies \widetilde{f}_U(x)=f_{T,\alpha}(U^{T}x)
~.
\]
On the other hand, if $\norm{x}>6\sqrt{T}$ then
\begin{gather*}
f_{T,\alpha}(U^{T}x)
\leq 
\norm{Ux}+\|\vec{1}\|+\frac{1}{3}
<
(\norm{x}+2\sqrt{T})
+(\norm{x}-6\sqrt{T})
=2(\norm{x}-2\sqrt{T})
\\
\implies \widetilde{f}_U(x)=2(\norm{x}-2\sqrt{T})
~,
\end{gather*}
which does not depend on $U$. Furthermore, repeating essentially the same calculation as in \lemref{lemma: delta,epsilon progress} shows that the $(\delta,\epsilon)$-stationary points of $\widetilde{f}_U(\cdot)$ are exactly the $(\delta,\epsilon)$-stationary points of $f(U^T\cdot)$, which in particular shows they satisfy $\prog_{\alpha}(U^T x)\geq \frac{3T}{4}$. Applying \propref{prop: Carmon's prog} with $R=6\sqrt{T}$ finishes the proof.

\subsection{Proof of \thmref{thm: convex 1dim}}

Let $\delta\leq\frac{1}{4},~T<\frac{1}{6}\sqrt{\log(1/\delta)}$.
By Yao's lemma \citep{yao1977probabilistic}, we may assume $\Acal$ is deterministic and provide a distribution over hard functions.
Namely, we will construct an index set $\Sigma$ and a parameterized subset of convex Lipschitz functions $\{f_{\sigma}:\sigma\in\Sigma\}$ that satisfy for any $\sigma\in\Sigma:$
\begin{itemize}
    \item $f_\sigma$ is convex and $1$-Lipschitz.
    \item $f_\sigma$ attains its minimum with $\arg\min_{x} f(x)\subset(0,1)$. In particular, by Lipschitzness $f(x_1)-\inf_{x}f(x)\leq1$.
    \item $\forall x\notin\arg\min_{x}f(x),~\forall f'(x)\in\partial f(x):~|f'(x)|\geq\frac{1}{4}$.
\end{itemize}
Yet, the first $T$ iterates produced by $\Acal(f_\sigma):x_{1}^{f_\sigma},\dots,x_{T}^{f_\sigma}$, satisfy 
\begin{equation} \label{eq: Yao hard distribution}
\Pr_{\sigma\sim\Dcal}\left[\exists t\in[T]:
\dist(x_{t}^{f_\sigma},\arg\min_x f_\sigma(x))<\delta\right]<\frac{2}{3}~,
\end{equation}
for some distribution $\Dcal$ over $\Sigma$.
In particular, by the third bullet above, with the same probability and \claimref{claim: 1dim_equivalence} they are not $(\delta,\frac{1}{4})$-stationary.

We start by defining a sequence of segments in $[0,1]$ parameterized by binary vectors,
which will form the basis for our upcoming function class construction.
% We denote $I:=[0,1]\subset\reals$, and given
For any $i\in\NN$ we denote
\begin{align*}
I^i_0 &:=\left(\frac{1}{2}-\frac{2}{8^{i+1}},\frac{1}{2}-\frac{1}{8^{i+1}}\right)\subset (0,1)
\\
I^i_1 &:= \left(\frac{1}{2}+\frac{1}{8^{i+1}},\frac{1}{2}+\frac{2}{8^{i+1}}\right)\subset (0,1)
~.
\end{align*}
We interpret the subscript $0$ as "left" and 1 as "right", corresponding to the location of the segment with respect to the center $\frac{1}{2}$. 
Denote by $\phi_{0}^i,\phi_{1}^i$ the unique affine functions with positive derivatives which map $(0,1)$ to $I^i_{0},I^i_{1}$ respectively - which we think of as ``rescaling'' the unit segment to the left or to the right.
% We also denote by $\phi^0$ the identity function. 
We can now define for any $k\in\NN,\,(\sigma_1,\dots,\sigma_k)\in\{0,1\}^k:$

\[
I_{\sigma_1,\dots,\sigma_k}:=
\phi^{1}_{\sigma_{1}}\circ\cdots\circ\phi^{k}_{\sigma_{k}}(0,1)
\subset (0,1)
~.
\]

The following lemma shows that the parameterization of the segments above forms a binary tree structure. This correspondence is given by considering binary vectors as vertices, while its sub-tree consists of vectors for which it serves as a prefix. Later on we will use this intuition to construct functions parameterized by binary vectors, in a way that essentially reduces optimization to traversing the tree. In particular, optimizing the function will require getting to a randomly selected leave which is unknown in advance to the algorithm.
% This will serve as an important property of our construction, since the proof will draw a connection between optimization and getting to a vertex of large depth in the corresponding tree.
% shows that these segments satisfy tree-like properties.

\begin{lemma} \label{lemma: I properties}
\begin{enumerate}
    \item For any $l<k$ and any $\sigma_{1},\dots,\sigma_{l},\dots,\sigma_{k}\in\{0,1\}: I_{\sigma_1,\dots,\sigma_l}\supset I_{\sigma_1,\dots,\sigma_{l},\dots,\sigma_k}$.

    \item If $(\sigma_1,\dots,\sigma_{k-1})\neq(\sigma'_1,\dots,\sigma'_{k-1})$ then for all $\sigma_k,\sigma'_k\in\{0,1\}:\ I_{\sigma_1,\dots,\sigma_k}\cap I_{\sigma'_1,\dots,\sigma'_k}=\emptyset$. 
\end{enumerate}

\end{lemma}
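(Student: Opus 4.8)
The plan is to prove both items directly from the defining identity $I_{\sigma_1,\dots,\sigma_k} = \phi^1_{\sigma_1}\circ\cdots\circ\phi^k_{\sigma_k}(0,1)$, using only two elementary facts about the affine maps $\phi^i_b$: each $\phi^i_b$ is an increasing affine bijection from $(0,1)$ onto $I^i_b\subset(0,1)$, and the two images $I^i_0,I^i_1$ are disjoint subintervals of $(0,1)$ (indeed $I^i_0$ lies strictly to the left of $\tfrac12$ and $I^i_1$ strictly to the right). A useful preliminary observation I would record first is the composition/prefix identity: for $l<k$,
\[
I_{\sigma_1,\dots,\sigma_k}
= \phi^1_{\sigma_1}\circ\cdots\circ\phi^l_{\sigma_l}\bigl(\,\phi^{l+1}_{\sigma_{l+1}}\circ\cdots\circ\phi^k_{\sigma_k}(0,1)\,\bigr)
= \Psi_l\bigl(I^{(l)}_{\sigma_{l+1},\dots,\sigma_k}\bigr),
\]
where $\Psi_l := \phi^1_{\sigma_1}\circ\cdots\circ\phi^l_{\sigma_l}$ is an increasing affine bijection $(0,1)\to I_{\sigma_1,\dots,\sigma_l}$ and the inner set is some subinterval of $(0,1)$. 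Since $\Psi_l$ maps $(0,1)$ onto $I_{\sigma_1,\dots,\sigma_l}$, it maps any subset of $(0,1)$ into $I_{\sigma_1,\dots,\sigma_l}$; applying this with the inner subinterval gives item~1 immediately.

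For item~2, suppose $(\sigma_1,\dots,\sigma_{k-1})\neq(\sigma'_1,\dots,\sigma'_{k-1})$ and let $l\in[k-1]$ be the \emph{first} index at which they differ, so $\sigma_j=\sigma'_j$ for $j<l$ and $\sigma_l\neq\sigma'_l$. Write $\Phi := \phi^1_{\sigma_1}\circ\cdots\circ\phi^{l-1}_{\sigma_{l-1}}$ (the empty composition, i.e.\ the identity, if $l=1$), which is an increasing affine bijection of $(0,1)$ onto its image and in particular injective. By item~1 applied within the tail, $I_{\sigma_1,\dots,\sigma_k}\subseteq I_{\sigma_1,\dots,\sigma_l} = \Phi\bigl(\phi^l_{\sigma_l}(0,1)\bigr) = \Phi(I^l_{\sigma_l})$, and likewise $I_{\sigma'_1,\dots,\sigma'_k}\subseteq \Phi(I^l_{\sigma'_l})$ (here we use $\sigma_j=\sigma'_j$ for $j<l$, so the outer prefix map is the same $\Phi$). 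Since $\sigma_l\neq\sigma'_l$, the intervals $I^l_{\sigma_l}$ and $I^l_{\sigma'_l}$ are the disjoint sets $I^l_0,I^l_1$ in some order; because $\Phi$ is injective, $\Phi(I^l_0)\cap\Phi(I^l_1)=\emptyset$, and therefore $I_{\sigma_1,\dots,\sigma_k}\cap I_{\sigma'_1,\dots,\sigma'_k}=\emptyset$.

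I do not expect a genuine obstacle here; the only thing requiring a line of care is the bookkeeping of "first index of disagreement" so that the shared prefix map $\Phi$ is literally the same function on both sides, after which injectivity of an increasing affine map does all the work. If one wanted to avoid even item~1 as a black box for item~2, one could alternatively unfold both compositions fully and compare them index-by-index, but routing through the prefix identity above is cleaner. I would also state once, near the start, the trivial fact that $I^i_0\subset(0,\tfrac12)$ and $I^i_1\subset(\tfrac12,1)$ so that disjointness of $I^i_0,I^i_1$ needs no computation.
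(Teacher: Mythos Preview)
Your proposal is correct and follows essentially the same approach as the paper. Item~1 is identical to the paper's one-line argument; for item~2 both you and the paper locate the first index of disagreement and exploit that the shared prefix map is the same on both sides, the only cosmetic difference being that you finish via injectivity of $\Phi$ on the disjoint sets $I^l_0,I^l_1$, whereas the paper writes out an explicit chain of $\sup/\inf$ inequalities to show one interval lies strictly below the other.
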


\begin{proof}
\begin{enumerate}
    \item $I_{\sigma_1,\dots,\sigma_l,\dots,\sigma_k}
=\phi^{1}_{\sigma_{1}}\circ\cdots\circ \phi^{l}_{\sigma_{l}}(
\phi^{l+1}_{\sigma_{l+1}}\circ\cdots
\circ\phi^{k}_{\sigma_{k}}(0,1))
\subset\phi^{1}_{\sigma_{1}}\circ\cdots\circ\phi^{l}_{\sigma_{l}}(0,1)
=I_{\sigma_1,\dots,\sigma_l}$.
    
    \item Let $i\leq k-1$ be the minimal index for which $\sigma_i\neq\sigma'_i$, and assume without loss of generality that $\sigma_i=0,\sigma'_i=1$. If $x\in I_{\sigma_1,\dots,\sigma_k}$, then
    \begin{align*}
    x
    &\leq
    \sup \phi^{1}_{\sigma_{1}}\circ\cdots\circ\phi^{i-1}_{\sigma_{i-1}}\circ\phi^{i}_{0}
    \circ\cdots\circ\phi^{k}_{\sigma_{k}}(0,1)
    \\
    &\leq
    \sup \phi^{1}_{\sigma_{1}}\circ\cdots\circ\phi^{i-1}_{\sigma_{i-1}}\circ\phi^{i}_{0}(0,1)
    \\
    &<\phi^{1}_{\sigma_{1}}\circ\cdots\circ\phi^{i-1}_{\sigma_{i-1}}(\frac{1}{2})
    \\
    &<\inf \phi^{1}_{\sigma_{1}}\circ\cdots\circ\phi^{i-1}_{\sigma_{i-1}}\circ\phi^{i}_{1}(0,1)
    \\
    &=\inf \phi^{1}_{\sigma'_{1}}\circ\cdots\circ\phi^{i-1}_{\sigma'_{i-1}}\circ\phi^{i}_{\sigma'_i}(0,1)
    \\
    &\leq \inf \phi^{1}_{\sigma'_{1}}\circ\cdots\circ\phi^{i-1}_{\sigma'_{i-1}}\circ\phi^{i}_{\sigma'_i}
    \circ\cdots\circ\phi^{k}_{\sigma'_{k}}(0,1)
    \\
    &=\inf I_{\sigma'_1,\dots,\sigma'_k}
    ~,
    \end{align*}
hence $x\notin I_{\sigma'_1,\dots,\sigma'_k}$.
\end{enumerate}
\end{proof}

With the parameterized family of segments in hand, we are ready to define a corresponding family of functions.
We define the function $h^{i}_{1}:[0,1]\setminus I^i_{1}\to\reals$ as follows
\[
h^{i}_{1}(x)
:=\begin{cases}
-\left(\frac{8^{i+1}-2}{8^{i+1}+2}\right)x+1\,, & 0\leq x\leq\frac{1}{2}+\frac{1}{8^{i+1}}\\
\left(\frac{8^{i+1}-2}{8^{i+1}-4}\right)x-\frac{2}{8^{i+1}-4}\,, & \frac{1}{2}+\frac{2}{8^{i+1}}\leq x\leq1
\end{cases}~,
\]
which is the piecewise linear function which satisfies $h(0)=h(1)=1,~h(\frac{1}{2}+\frac{1}{8^{i+1}})=h(\frac{1}{2}+\frac{2}{8^{i+1}})=\frac{1}{2}+\frac{1}{8^{i+1}}$, and also define $h^{i}_{0}:[0,1]\setminus I^i_{0}\to\reals$ as $h^{i}_{0}(x):=h^{i}_{1}(1-x)$ - see 
\figref{fig: h_1dim} for an illustration.
\begin{figure}
    \centering
    \includegraphics[scale=0.3, trim=0 50 0 150]{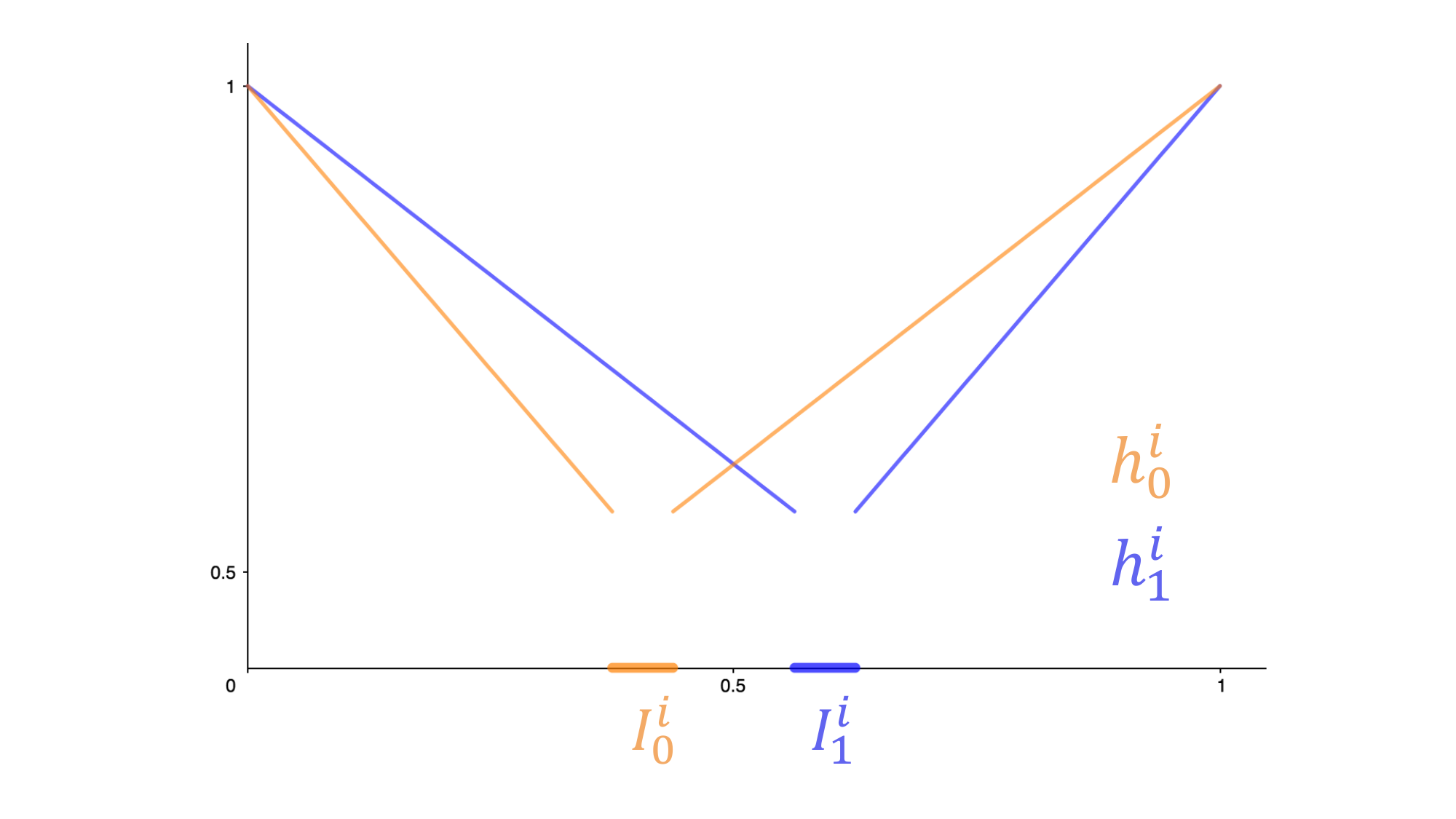}
    \caption{Illustration of $I_0,I_1$ and $h_0,h_1$.}
    \label{fig: h_1dim}
\end{figure}
We also denote $\Phi^i(x):=\phi^i_1(x)-\frac{1}{8^{i+1}}$ which is the affine function which satisfies $\Phi^i(0)=\frac{1}{2},\,\Phi^i(1)=\frac{1}{2}+\frac{1}{8^{i+1}},\,(\Phi^i)'=(\phi^i_0)'=(\phi^i_1)'$.

Finally, given any $N\geq2$ and $\sigma=(\sigma_1,\dots,\sigma_N)\in\{0,1\}^{N}$ we are ready to define $f:\reals\to\reals$ as
\[
f^{N}_{\sigma}(x)
:=
\begin{cases}
h^1_{\sigma_1}(x)
& x\in [0,1]\setminus I_{\sigma_1}
\\
\Phi^1\circ h^2_{\sigma_2}\circ(\phi^1_{\sigma_1})^{-1}(x)
& x\in I_{\sigma_1}\setminus I_{\sigma_1,\sigma_2}
\\
\Phi^1\circ\Phi^2\circ h^3_{\sigma_3} \circ (\phi^1_{\sigma_1}\circ\phi^2_{\sigma_2})^{-1}(x)
& x\in I_{\sigma_1,\sigma_2}
\setminus I_{\sigma_1,\sigma_2,\sigma_3}
\\
\vdots &
\\
\Phi^{1}\circ\cdots\circ\Phi^{N-1}\circ h^N_{\sigma_N} \circ(\phi^{1}_{\sigma_{1}}\circ\cdots\circ\phi^{N-1}_{\sigma_{N-1}})^{-1} (x)
& x\in 
I_{\sigma_1,\dots,\sigma_{N-1}}
\setminus I_{\sigma_1,\dots,\sigma_{N}}
\\
\Phi^{1}\circ\cdots\circ\Phi^{N-1}(0)
& x\in I_{\sigma_1,\dots,\sigma_{N}}
\\
1-x & x<0
\\
x & x>1
\end{cases}
~~.
\]
The following lemma show that the constructed function satisfies our desired requirements.

\begin{lemma}
For any $N\geq2,\,\sigma\in\{0,1\}^N:$
\begin{itemize}
    \item $f^N_\sigma$ is $2$-Lipschitz and convex.
    \item $f^N_{\sigma}(x)$ attains its minimum, and $\forall x\notin\arg\min_x f^N_\sigma(x)~\forall g\in\partial f^N_\sigma(x):|g|\geq \frac{1}{2}$.
    \item $f^N_{\sigma}(0)-\min_x f^N_{\sigma}(x)\leq 1$ and $\forall x\in\arg\min_x f^N_\sigma(x):|x|\leq1$.
\end{itemize}

\end{lemma}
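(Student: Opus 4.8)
The plan is to verify the three bullet points by tracking how the function is built recursively, exploiting the self-similar structure encoded by Lemma~\ref{lemma: I properties}. The key observation is that each ``block'' $\Phi^1\circ\cdots\circ\Phi^{j-1}\circ h^j_{\sigma_j}\circ(\phi^1_{\sigma_1}\circ\cdots\circ\phi^{j-1}_{\sigma_{j-1}})^{-1}$ is an affine image of a fixed template function $h^j_{\sigma_j}$, with both the inner and outer affine maps having \emph{positive} derivative. Since $\phi^i_{\sigma_i}$ and $\Phi^i$ share the same derivative $c_i := (\phi^i_0)'=(\phi^i_1)'=(\Phi^i)'\in(0,1)$, the derivative of the $j$-th block at a point $x\in I_{\sigma_1,\dots,\sigma_{j-1}}\setminus I_{\sigma_1,\dots,\sigma_j}$ equals $(h^j_{\sigma_j})'\big((\phi^1_{\sigma_1}\circ\cdots\circ\phi^{j-1}_{\sigma_{j-1}})^{-1}(x)\big)$ --- the inner $1/c_1\cdots c_{j-1}$ and outer $c_1\cdots c_{j-1}$ factors cancel. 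So the slopes of $f^N_\sigma$ are literally the slopes of the templates $h^i_b$, of which there are only two per template; a direct inspection of the formula for $h^i_1$ shows the two slopes are $-\frac{8^{i+1}-2}{8^{i+1}+2}$ and $\frac{8^{i+1}-2}{8^{i+1}-4}$, both of absolute value in $[\tfrac12,2]$ for all $i\geq1$ (the first is in $(-1,-\tfrac12]$, the second in $[1,2)$), and similarly for $h^i_0$ by the reflection $x\mapsto 1-x$. Together with the outer pieces $1-x$ (slope $-1$) and $x$ (slope $+1$), this gives the $2$-Lipschitz bound and the $|g|\geq\tfrac12$ lower bound away from the minimizer.

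Concretely, for the first bullet I would argue: \textbf{(i) continuity}, by checking that consecutive pieces agree at the shared endpoints $\inf I_{\sigma_1,\dots,\sigma_j}$ and $\sup I_{\sigma_1,\dots,\sigma_j}$ --- this follows because $h^j_{\sigma_j}$ was defined precisely so that $h^j_{\sigma_j}$ evaluated at the endpoints of the gap maps to the endpoints of $I^j_{\sigma_j}$ at the correct heights (the conditions $h(0)=h(1)=1$ and $h(\tfrac12\pm\tfrac1{8^{i+1}})=h(\tfrac12\pm\tfrac2{8^{i+1}})=\tfrac12\pm\tfrac1{8^{i+1}}$), which under the affine conjugation become exactly the matching conditions across blocks, and at the innermost level the constant piece $\Phi^1\circ\cdots\circ\Phi^{N-1}(0)$ matches $h^N_{\sigma_N}$ at the endpoints of $I_{\sigma_1,\dots,\sigma_N}$ since $h^N_{\sigma_N}(\text{gap endpoint})$ equals the appropriate rescaled $0$; \textbf{(ii) convexity}, by showing the list of slopes is nondecreasing as $x$ increases through $[0,1]$ --- slopes start at $-1$ (outer left piece), and each successive block, read left to right, contributes a negative slope followed (after the minimizer region) by a positive slope, with deeper blocks having slopes closer to $0$ in magnitude; crucially the minimum is attained inside the innermost segment $I_{\sigma_1,\dots,\sigma_N}$ where the function is constant, so all slopes to its left are negative and all to its right are positive, and one checks the magnitudes decrease monotonically toward the center and then increase away from it. For $x<0$ and $x>1$ the pieces $1-x$ and $x$ have slopes $-1$ and $+1$, consistent with convexity since $\pm1$ are the extreme slopes. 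The $2$-Lipschitz constant is the max over all these slopes, which is $<2$.

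For the second bullet, the minimizer set is exactly $I_{\sigma_1,\dots,\sigma_N}$ (a nonempty open interval, so the min is attained on its closure); every other point lies in the interior of some block or outer piece and there $|f'|$ equals one of the template slopes or $1$, all $\geq\tfrac12$; at the finitely many kink points the subdifferential is the interval between the two adjacent slopes, which again has all elements of absolute value $\geq\tfrac12$ because the two adjacent slopes have the same sign (both negative left of the center, both positive right of it) --- this is where convexity of the slope-ordering is used again. For the third bullet, $f^N_\sigma(0)=h^1_{\sigma_1}(0)=1$ by the defining condition $h(0)=1$, and the minimum value is $\Phi^1\circ\cdots\circ\Phi^{N-1}(0)\geq 0$ (each $\Phi^i$ maps $[0,1]$ into $[\tfrac12,1)\subset[0,\infty)$ and $\Phi^i(0)=\tfrac12>0$), so $f^N_\sigma(0)-\min f^N_\sigma\leq 1$; and $\arg\min f^N_\sigma=I_{\sigma_1,\dots,\sigma_N}\subset(0,1)$ by Lemma~\ref{lemma: I properties}, so $|x|\leq1$ there.

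The main obstacle I expect is the bookkeeping in the convexity argument: one must order \emph{all} the breakpoints of $f^N_\sigma$ across all $N$ blocks on both sides of the center and verify the slope sequence is globally monotone, which requires comparing slopes $-\frac{8^{i+1}-2}{8^{i+1}+2}$ and $\frac{8^{i+1}-2}{8^{i+1}-4}$ across different levels $i$ and checking they interleave correctly with the nesting $I_{\sigma_1,\dots,\sigma_{i-1}}\supset I_{\sigma_1,\dots,\sigma_i}$; the factor-of-$8$ separation between scales is what makes this go through cleanly (segments at level $i+1$ sit well inside the ``gap'' left at level $i$), but making the inequalities precise is the technical heart. A secondary subtlety is confirming that the conjugating affine maps genuinely cancel as claimed, i.e.\ that $(\Phi^1\circ\cdots\circ\Phi^{j-1})'=(\phi^1_{\sigma_1}\circ\cdots\circ\phi^{j-1}_{\sigma_{j-1}})'$, which holds because $(\Phi^i)'=(\phi^i_b)'$ for each $i$ and $b$ regardless of $b$ --- this is exactly why $\Phi^i$ was defined with that shared derivative.
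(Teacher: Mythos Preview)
Your overall strategy matches the paper's: establish continuity at the breakpoints, exploit the cancellation $(\Phi^i)'=(\phi^i_{\sigma_i})'$ so that each block's derivative equals a template slope of $h^j_{\sigma_j}$, bound those template slopes in absolute value within $[\tfrac12,2]$ to get both the Lipschitz constant and the $|g|\ge\tfrac12$ claim, and then argue slope monotonicity for convexity. You also correctly single out the cross-level slope comparison as the technical heart, exactly as the paper does.

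There is, however, a concrete error in the mechanism you give for the convexity step. You write that ``deeper blocks hav[e] slopes closer to $0$ in magnitude'', but this is false: the possible left/right slopes of $h^j_{\sigma_j}$ are
\[
-\tfrac{8^{j+1}-2}{8^{j+1}+2},\quad -\tfrac{8^{j+1}-2}{8^{j+1}-4},\quad \tfrac{8^{j+1}-2}{8^{j+1}+2},\quad \tfrac{8^{j+1}-2}{8^{j+1}-4},
\]
and all of these tend to $\pm 1$, not to $0$, as $j$ grows. For instance, with $\sigma_1=1$ the level-$1$ left slope is $-62/66\approx -0.94$, while the level-$2$ left slope is $-510/514\approx -0.99$ or $-510/508\approx -1.004$ depending on $\sigma_2$; in either case it is \emph{more} negative, not closer to $0$. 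So the heuristic you plan to use to order the slopes is wrong, and if you follow it the convexity argument will not close. The paper does not argue via any ``slopes shrink'' heuristic; instead it writes out the two extremal slope values at adjacent depths and compares them by an explicit inequality. You should replace your qualitative picture with that explicit case analysis, and when you carry it out, track carefully which of the two slope options (depending on $\sigma_j\in\{0,1\}$) is active at each level and which direction of inequality is actually required for monotonicity --- this is precisely the delicate bookkeeping you anticipated, and your current intuition for its resolution points the wrong way.
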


\begin{proof}
Let $N\geq2,\,\sigma\in\{0,1\}^N$. We start by proving that $f^N_\sigma$ is continuous.
It is clear that $f^N_\sigma$ is piecewise linear since $h^i_{\sigma_i},\phi^i_{\sigma_i},\Phi^i$ are, So in order to show continuity we need to show continuity at the endpoints of adjacent linear pieces. Since by \lemref{lemma: I properties} it holds that $[0,1]\supset I_{\sigma_1}\supset I_{\sigma_1,\sigma_2}\supset\dots\supset I_{\sigma_1,\dots,\sigma_N}$, inspecting the definition of $f^N_\sigma$ we see that we need to verify continuity only at the endpoints of $I_{\sigma_1,\dots,\sigma_i}$ for any $i\in[N]$. 
For any $i\in[N]$, the left endpoint of $I_{\sigma_1,\dots,\sigma_i}$ satisfies
\begin{align*}
&\lim_{x\to (\inf I_{\sigma_1,\dots,\sigma_i})^+}f^N_\sigma(x)
\\
&=\lim_{x\to (\inf \phi^{1}_{\sigma_{1}}\circ\cdots\circ\phi^{i}_{\sigma_{i}}(0,1))^+}
\Phi^{1}\circ\cdots\circ\Phi^{i}\circ h^{i+1}_{\sigma_{i+1}} \circ(\phi^{1}_{\sigma_{1}}\circ\cdots\circ\phi^{i}_{\sigma_{i}})^{-1} (x) 
\\
&=\lim_{x\to0^+}
\Phi^{1}\circ\cdots\circ\Phi^{i}\circ h^{i+1}_{\sigma_{i+1}} (x) 
\\
&=
\Phi^{1}\circ\cdots\circ\Phi^{i}(\lim_{x\to0^+} h^{i+1}_{\sigma_{i+1}} (x))
\\
&=
\Phi^{1}\circ\cdots\circ\Phi^{i-1}\circ\Phi^{i}(1)
\\
&=
\Phi^{1}\circ\cdots\circ\Phi^{i-1}\circ\Phi^{i}\circ h^i_{\sigma_i}(0)
\\
&=\lim_{x\to (\inf \phi^{1}_{\sigma_{1}}\circ\cdots\circ\phi^{i-1}_{\sigma_{i-1}}(0,1))^-}
\Phi^{1}\circ\cdots\circ\Phi^{i-1}\circ h^{i}_{\sigma_{i}} \circ(\phi^{1}_{\sigma_{1}}\circ\cdots\circ\phi^{i-1}_{\sigma_{i-1}})^{-1} (x)
\\
&=\lim_{x\to (\inf I_{\sigma_1,\dots,\sigma_i})^-}f^N_\sigma(x)
~.
\end{align*}
% \begin{align*}
% &\lim_{x\to \mathrm{Left}[\phi_1^{\sigma_1}\circ\cdots\circ\phi_{i-1}^{\sigma_{i-1}}(I^i_{\sigma_i})]^+}f_N^\sigma(x)
% \\
% =&\lim_{x\to \mathrm{Left}[\phi_1^{\sigma_1}\circ\cdots\circ\phi_{i-1}^{\sigma_{i-1}}(I^i_{\sigma_i})]^+}
% \Phi_{1}\circ\cdots\circ\Phi_{i}\circ h_{i+1}^{\sigma_{i+1}} \circ(\phi_{1}^{\sigma_{1}}\circ\cdots\circ\phi_{i}^{\sigma_{i}})^{-1} (x) 
% \\
% =&\lim_{x\to \mathrm{Left}[I^i_{\sigma_i}]^+}
% \Phi_{1}\circ\cdots\circ\Phi_{i}\circ h_{i+1}^{\sigma_{i+1}} \circ(\phi_{i}^{\sigma_{i}})^{-1} (x)
% \\
% =&\lim_{x\to0^+}
% \Phi_{1}\circ\cdots\circ\Phi_{i}\circ h_{i+1}^{\sigma_{i+1}} (x) 
% \\
% =&
% \Phi_{1}\circ\cdots\circ\Phi_{i}(\lim_{x\to0^+} h_{i+1}^{\sigma_{i+1}} (x))
% \\
% =&
% \Phi_{1}\circ\cdots\circ\Phi_{i-1}\circ\Phi_{i}(1)
% \\
% =&
% \Phi_{1}\circ\cdots\circ\Phi_{i-1}\circ\Phi_{i}\circ h_i^{\sigma_i}(0)
% \\
% =&\lim_{x\to \mathrm{right\,of\,left}[(I_{i-1}^{\sigma_{i-1}})]^-}
% \Phi_{1}\circ\cdots\circ\Phi_{i-1}\circ h_{i}^{\sigma_{i}} \circ(\phi_{i-1}^{\sigma_{i-1}})^{-1} (x)
% \\
% =&\lim_{x\to \mathrm{right\,of\,left}[(I_{i-1}^{\sigma_{i-1}})]^-}
% \Phi_{1}\circ\cdots\circ\Phi_{i-1}\circ h_{i}^{\sigma_{i}} \circ(\phi_{i-1}^{\sigma_{i-1}})^{-1} (x) 
% \\
% =&\lim_{x\to \mathrm{right\,of\,left}[\phi_1^{\sigma_1}\circ\cdots\circ\phi_{i-2}^{\sigma_{i-2}}(I_{i-1}^{\sigma_{i-1}})]^-}
% \Phi_{1}\circ\cdots\circ\Phi_{i-1}\circ h_{i}^{\sigma_{i}} \circ(\phi_{1}^{\sigma_{1}}\circ\cdots\circ\phi_{i-1}^{\sigma_{i-1}})^{-1} (x) 
% \end{align*}
The right endpoint follows a similar calculation, resulting in continuity.

Having established that $f^N_\sigma$ is piecewise linear and continuous, Lipschitzness will follow from the fact the each linear segment has a bounded slope. To see this holds, recall that
\begin{equation} \label{eq:Phi',phi' cancel}
(\Phi^i)'=(\phi^i)'\implies
(\Phi^i)'=\frac{1}{((\phi^i)^{-1})'}
~,
\end{equation}
so we get
\[
\left|\left(\Phi^{1}\circ\cdots\circ\Phi^{i-1}\circ h^i_{\sigma_i} \circ(\phi^{1}_{\sigma_{1}}\circ\cdots\circ\phi^{i-1}_{\sigma_{i-1}})^{-1} \right)'\right|
=\left|(h^i_{\sigma_i})'\right|
\leq\frac{8^{i+1}-2}{8^{i+1}-4}
=1+\frac{2}{8^{i+1}-4}<2~.
\]
In order to prove convexity, it is enough to show that each linear segment has slope which is larger than the segment to its left (hence the derivative is increasing which implies convexity).
Recalling that by \lemref{lemma: I properties} we have 
$[0,1]\supset I_{\sigma_1}\supset I_{\sigma_1,\sigma_2}\supset\dots\supset I_{\sigma_1,\dots,\sigma_N}$, we see that it is enough to establish monotonicity between the left linear segments of $I_{\sigma_1,\dots,\sigma_{i}},\,I_{\sigma_1,\dots,\sigma_{i+1}}$ (in that order) and between the right linear segments of $I_{\sigma_1,\dots,\sigma_{i+1}},\,,
I_{\sigma_1,\dots,\sigma_{i}}$ (in that order) for any $i\in[N]$.
In order to do that, first note that by definition of $h^i_1,h^i_0$ we have
\begin{align}\label{eq: h_i'}
(h_i^0)'(x)
&=
\begin{cases}
-\left(\frac{8^{i+1}-2}{8^{i+1}-4}\right)\,, & 0\leq x\leq\frac{1}{2}-\frac{2}{8^{i+1}}\\ \left(\frac{8^{i+1}-2}{8^{i+1}+2}\right)\,,
& \frac{1}{2}-\frac{1}{8^{i+1}}\leq x\leq1 \nonumber
\end{cases}
~~,
\\
(h^i_1)'(x)
&=
\begin{cases}
-\left(\frac{8^{i+1}-2}{8^{i+1}+2}\right)\,, & 0\leq x\leq\frac{1}{2}+\frac{1}{8^{i+1}}\\
\left(\frac{8^{i+1}-2}{8^{i+1}-4}\right)\,, & \frac{1}{2}+\frac{2}{8^{i+1}}\leq x\leq1
\end{cases}
~~.
\end{align}
Denoting by $L(I_{\sigma_1,\dots,\sigma_{i}})$ the left linear segment of $I_{\sigma_1,\dots,\sigma_{i}}$, we see that for any $x\in L(I_{\sigma_1,\dots,\sigma_{i}})$ we have by \eqref{eq:Phi',phi' cancel} and \eqref{eq: h_i'}:
\[
(f^N_{\sigma})'(x)
=\left(\Phi^{1}\circ\cdots\circ\Phi^{i}\circ h^{i+1}_{\sigma_{i+1}} \circ(\phi^{1}_{\sigma_{1}}\circ\cdots\circ\phi^{i}_{\sigma_{i}})^{-1}\right)'(x)
=\left(h^{i+1}_{\sigma_{i+1}}\right)'(x\in L([0,1]))
\leq -\left(\frac{8^{i+2}-2}{8^{i+2}+2}\right)~,
\]
while for any $x\in L(I_{\sigma_1,\dots,\sigma_{i+1}})$ we have
\[
(f^N_{\sigma})'(x)
=\left(\Phi^{1}\circ\cdots\circ\Phi^{i+1}\circ h^{i+2}_{\sigma_{i+2}} \circ(\phi^{1}_{\sigma_{1}}\circ\cdots\circ\phi^{i+1}_{\sigma_{i+1}})^{-1}\right)'(x)
=\left(h^{i+2}_{\sigma_{i+2}}\right)'(x\in L[0,1])
\geq-\left(\frac{8^{i+3}-2}{8^{i+3}-4}\right)
~.
\]
It is easy to verify for all $i\in\NN$ that $-\left(\frac{8^{i+3}-2}{8^{i+3}-4}\right)
\leq
-\left(\frac{8^{i+2}-2}{8^{i+2}+2}\right)$, which established the desired monotonicity.
Checking the right linear segments results in the same calculation, overall proving the first bullet.

For the second bullet, note that $f^N_\sigma$ attains its minimum at $I_{\sigma_1,\dots,\sigma_N}$ since till then the function is decreasing, and after that it's increasing (and on that segment it is constant). Elsewhere we already saw that the derivatives are $\pm\left(\frac{8^{i+1}-2}{8^{i+1}+2}\right),\pm\left(\frac{8^{i+1}-2}{8^{i+1}+4}\right)$ which are easily verified to have absolute value larger than $\frac{1}{2}$ for any $i\in\NN$.

The third bullet easily follows since $I_{\sigma_1,\dots,\sigma_N}\subset [0,1]$ and
$f^N_\sigma(0)=1,\,\Phi^1\circ\cdots\circ\Phi^{N-1}(0)> 0$.
\end{proof}

The following lemma follows immediately from the definition of $f^N_\sigma$ and from \lemref{lemma: I properties}, though we state it for future reference.

\begin{lemma} \label{lemma: I does not reveal}
For any $N\geq2,\,\sigma\in\{0,1\}^N,\,1\leq k<N$, it holds that $f^{N}_{\sigma}(x),(f^{N}_{\sigma})'(x)$ do not depend on $\sigma_{k+1},\dots,\sigma_{N}$ for $x\notin I_{\sigma_1,\dots,\sigma_k}$.
\end{lemma}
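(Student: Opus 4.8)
The plan is to read the conclusion straight off the piecewise definition of $f^N_\sigma$ once we feed in the nesting of the intervals $I_{\sigma_1,\dots,\sigma_j}$ supplied by \lemref{lemma: I properties}. To state the claim precisely, fix $1\le k<N$ and two strings $\sigma,\tau\in\{0,1\}^N$ that agree on their first $k$ coordinates; then $I_{\sigma_1,\dots,\sigma_j}=I_{\tau_1,\dots,\tau_j}$ for every $j\le k$, and it suffices to show $f^N_\sigma(x)=f^N_\tau(x)$ for all $x\notin I_{\sigma_1,\dots,\sigma_k}$, together with the analogous equality of derivatives wherever they exist. The first step is to note that part~1 of \lemref{lemma: I properties} gives the chain $[0,1]\supset I_{\sigma_1}\supset\cdots\supset I_{\sigma_1,\dots,\sigma_N}$, which telescopes into the disjoint decomposition
\[
[0,1]=\bigl([0,1]\setminus I_{\sigma_1}\bigr)\ \sqcup\ \bigsqcup_{m=2}^{N}\bigl(I_{\sigma_1,\dots,\sigma_{m-1}}\setminus I_{\sigma_1,\dots,\sigma_m}\bigr)\ \sqcup\ I_{\sigma_1,\dots,\sigma_N}\,,
\]
whose pieces are exactly the domains of the branches of $f^N_\sigma$; truncating this telescope at level $k$ shows that $[0,1]\setminus I_{\sigma_1,\dots,\sigma_k}$ is precisely the union of the first $k$ branch-domains, while all deeper shells and the innermost constant piece sit inside $I_{\sigma_1,\dots,\sigma_k}$.

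The second step is the structural observation that on the $m$-th shell the function equals $\Phi^1\circ\cdots\circ\Phi^{m-1}\circ h^m_{\sigma_m}\circ(\phi^1_{\sigma_1}\circ\cdots\circ\phi^{m-1}_{\sigma_{m-1}})^{-1}$, and since each $\Phi^i$ is a fixed affine map carrying no dependence on $\sigma$, this formula — and the membership test for this shell — involves $\sigma$ only through $\sigma_1,\dots,\sigma_m$. Hence, given $x\notin I_{\sigma_1,\dots,\sigma_k}$: if $x<0$ then $f^N_\sigma(x)=1-x$ and if $x>1$ then $f^N_\sigma(x)=x$, neither depending on $\sigma$; and if $x\in[0,1]$ then by the decomposition above $x$ belongs to the $m$-th shell for some $m\le k$, on which $f^N_\sigma$ is given by the displayed formula in $\sigma_1,\dots,\sigma_m$ — coordinates on which $\sigma$ and $\tau$ coincide — so $f^N_\sigma(x)=f^N_\tau(x)$. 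Wherever $(f^N_\sigma)'(x)$ exists the same applies, since there it equals the derivative of that single branch formula, again of index $m\le k$.

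The one point I expect to require a moment of care is the behaviour of the derivative at the shell boundaries $\partial I_{\sigma_1,\dots,\sigma_j}$. Using $\overline{I^m_{\sigma_m}}\subset(0,1)$ one has $\overline{I_{\sigma_1,\dots,\sigma_m}}\subset I_{\sigma_1,\dots,\sigma_{m-1}}$, from which such a boundary point lies outside $I_{\sigma_1,\dots,\sigma_k}$ exactly when $j\le k$; at such a point $f^N_\sigma$ is glued from the branch-$j$ and branch-$(j+1)$ linear pieces, and a direct comparison of the one-sided slopes via \eqref{eq: h_i'} and the cancellation \eqref{eq:Phi',phi' cancel} shows these slopes are always unequal, so $f^N_\sigma$ fails to be differentiable there and the claim about $(f^N_\sigma)'$ at those points is vacuous. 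At every other point $f^N_\sigma$ is differentiable with slope governed by a single branch of index at most $k$, which concludes the argument and confirms that the lemma is an immediate, if bookkeeping-heavy, consequence of the construction.
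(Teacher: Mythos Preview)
Your proposal is correct and follows exactly the approach the paper indicates: the paper's own ``proof'' is a single sentence stating that the lemma is immediate from the definition of $f^N_\sigma$ together with \lemref{lemma: I properties}, and you have simply written out those details. Your extra care about differentiability at the shell boundaries $\partial I_{\sigma_1,\dots,\sigma_j}$ (in particular the $j=k$ case, where the inner one-sided slope a priori involves $\sigma_{k+1}$) is a nice addition that the paper leaves implicit.
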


Given \lemref{lemma: I properties} and \lemref{lemma: I does not reveal}, we state the following lemma which whose proof appears in \citep[Proof of Proposition 1]{kornowski2021oracle}.

\begin{lemma}[\citep{kornowski2021oracle}] \label{lemma: prob jump into segment}
For any $t\in\NN$, $1\leq l<k\leq N:$ 
\[\Pr_{\sigma}\left[
x_{t+1}\in I_{\sigma_{1},\dots,\sigma_{l},\dots,\sigma_{k}}
\middle|x_1,\dots,x_{t}\notin I_{\sigma_{1},\dots,\sigma_{l}}
\right]
\leq \frac{1}{2^{k-l-1}}
~.
\]
\end{lemma}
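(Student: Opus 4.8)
The plan is to condition on the ``prefix'' $\sigma_{1},\dots,\sigma_{l}$ of the hard parameter and exploit the fact that, as long as the algorithm has not yet entered $I_{\sigma_{1},\dots,\sigma_{l}}$, it has learned nothing about the deeper coordinates $\sigma_{l+1},\dots,\sigma_{N}$; hence its next query is essentially pinned down, while those coordinates are still uniformly random. By Yao's lemma \citep{yao1977probabilistic} we may take $\Acal$ deterministic and $\sigma\sim\mathrm{Unif}(\{0,1\}^{N})$. Fix $1\le l<k\le N$ and abbreviate the conditioning event $E:=\{x_{1},\dots,x_{t}\notin I_{\sigma_{1},\dots,\sigma_{l}}\}$.

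The first step is an induction on the round index showing that \emph{on $E$} the whole transcript (queries and oracle answers) through round $t$, and therefore the next query $x_{t+1}$, is a deterministic function of $\sigma_{1},\dots,\sigma_{l}$ alone: $x_{1}$ is fixed; and if $x_{1},\dots,x_{s}\notin I_{\sigma_{1},\dots,\sigma_{l}}$, then using the nesting $I_{\sigma_{1},\dots,\sigma_{l}}\supseteq I_{\sigma_{1},\dots,\sigma_{j}}$ for $j\ge l$ (\lemref{lemma: I properties}(1)) together with \lemref{lemma: I does not reveal}, the value and gradient of $f^{N}_{\sigma}$ at each of $x_{1},\dots,x_{s}$ depend only on $\sigma_{1},\dots,\sigma_{l}$, so $x_{s+1}$ does too. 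The same comparison — between the true run and the run of $\Acal$ against any fixed, $\sigma_{1:l}$-determined function agreeing with $f^{N}_{\sigma}$ off $I_{\sigma_{1},\dots,\sigma_{l}}$ — also shows that $E$ itself is a $\sigma_{1},\dots,\sigma_{l}$-measurable event.

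Granting this, fix a prefix $s=(s_{1},\dots,s_{l})$ consistent with $E$. Conditioned on $E$ and $\sigma_{1:l}=s$, the point $x_{t+1}=:p(s)\in\reals$ is fixed, whereas $\sigma_{l+1},\dots,\sigma_{N}$ are still i.i.d.\ uniform (the conditioning constrains only the first $l$ coordinates). By \lemref{lemma: I properties}(2), together with $I^{k}_{0}\cap I^{k}_{1}=\emptyset$, the $2^{k-l}$ sets $\{I_{s_{1},\dots,s_{l},\tau}:\tau\in\{0,1\}^{k-l}\}$ are pairwise disjoint, so $p(s)$ lies in at most one of them. Hence
\[
\Pr_{\sigma}\bigl[x_{t+1}\in I_{\sigma_{1},\dots,\sigma_{k}}\ \big|\ E,\ \sigma_{1:l}=s\bigr]
=\Pr_{\tau\sim\mathrm{Unif}(\{0,1\}^{k-l})}\bigl[p(s)\in I_{s_{1},\dots,s_{l},\tau}\bigr]
\le 2^{-(k-l)}\le 2^{-(k-l-1)},
\]
and averaging over the random prefix $\sigma_{1:l}$ via the law of total probability yields the claim.

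I expect the only delicate point to be the first step — the measurability of $E$ and the fact that $x_{t+1}$ is determined by $\sigma_{1},\dots,\sigma_{l}$ on $E$. The care is needed for queries landing exactly on one of the finitely many boundary points of $I_{\sigma_{1},\dots,\sigma_{l}}$, where $f^{N}_{\sigma}$ is nondifferentiable and its Clarke subdifferential can in principle disclose the single extra coordinate $\sigma_{l+1}$ (but nothing deeper); conditioning on $\sigma_{1:l+1}$ instead of $\sigma_{1:l}$ absorbs this, and is precisely what makes the statement claim $2^{-(k-l-1)}$ rather than $2^{-(k-l)}$. Everything else is bookkeeping with the nesting and disjointness already recorded in \lemref{lemma: I properties}; this is the argument carried out in \citep[Proof of Proposition 1]{kornowski2021oracle}.
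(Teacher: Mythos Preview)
Your proposal is correct and follows precisely the approach the paper defers to: the paper does not give its own proof of this lemma but simply cites \citep[Proof of Proposition~1]{kornowski2021oracle}, and your argument is a faithful reconstruction of that argument from \lemref{lemma: I properties} and \lemref{lemma: I does not reveal}. Your identification of the boundary issue --- that a query landing exactly on $\partial I_{\sigma_1,\dots,\sigma_l}$ can leak $\sigma_{l+1}$ through the Clarke subdifferential, forcing one to condition on $\sigma_{1:l+1}$ rather than $\sigma_{1:l}$ and hence yielding $2^{-(k-l-1)}$ instead of $2^{-(k-l)}$ --- is exactly the reason for the stated exponent.
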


We now claim that points that are $\delta$ close to $\arg\min_x f^N_\sigma(x)=I_{\sigma_1,\dots,\sigma_N}\subset I_{\sigma_1,\dots,\sigma_{N-1}}\subset I_{\sigma_1,\dots,\sigma_{N-2}}\subset\dots$ are necessarily inside $I_{\sigma_1,\dots,\sigma_k}$ with large enough $k$.

\begin{lemma} \label{lemma: delta distance segment}
Let $k=\lfloor\frac{1}{4}\sqrt{\log(1/\delta)}\rfloor<N\in\NN$. Then $\dist(x,I_{\sigma_1,\dots,\sigma_N})<\delta$ implies $x\in I_{\sigma_1,\dots,\sigma_{k}}$.
\end{lemma}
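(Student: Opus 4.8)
The plan is to estimate the length of the interval $I_{\sigma_1,\dots,\sigma_k}$ and show it exceeds $2\delta$, so that any point within distance $\delta$ of $I_{\sigma_1,\dots,\sigma_N}\subset I_{\sigma_1,\dots,\sigma_k}$ must itself lie in $I_{\sigma_1,\dots,\sigma_k}$ (using that $I_{\sigma_1,\dots,\sigma_N}$ is not too close to the endpoints of $I_{\sigma_1,\dots,\sigma_k}$). First I would compute the derivative of the composed affine map $\phi^1_{\sigma_1}\circ\cdots\circ\phi^k_{\sigma_k}$: since $(\phi^i_0)'=(\phi^i_1)'$ equals the length of $I^i_0$ (equivalently $I^i_1$), which is $\frac{1}{8^{i+1}}$, the length of $I_{\sigma_1,\dots,\sigma_k}$ is exactly $\prod_{i=1}^{k}\frac{1}{8^{i+1}}=8^{-\sum_{i=1}^k(i+1)}=8^{-(k(k+1)/2+k)}$. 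So $|I_{\sigma_1,\dots,\sigma_k}|=8^{-(k^2+3k)/2}$.

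Next I would show this length is at least $2\delta$ (or rather, enough room on each side) given $k=\lfloor\tfrac14\sqrt{\log(1/\delta)}\rfloor$. With $k\le\tfrac14\sqrt{\log(1/\delta)}$ we have $k^2+3k\le \tfrac{1}{16}\log(1/\delta)+\tfrac34\sqrt{\log(1/\delta)}$, and since $\log 8 = 3\log 2 < 3$, the exponent satisfies $\tfrac{(k^2+3k)\log 8}{2} < \log(1/\delta)$ once $\log(1/\delta)$ is bounded appropriately — here one must be slightly careful about the base of the logarithm and small values of $\delta$, but since $\delta\le\frac14$ the quantity $\log(1/\delta)$ is bounded below by a constant, and the lower-order term $\tfrac34\sqrt{\log(1/\delta)}$ is dominated. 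Hence $|I_{\sigma_1,\dots,\sigma_k}| > \delta$ with room to spare; in fact a cleaner route is to argue that $I_{\sigma_1,\dots,\sigma_N}$ sits inside $I_{\sigma_1,\dots,\sigma_{k+1}}$ (by Lemma~\ref{lemma: I properties}, part 1, as $N>k$), and that $I_{\sigma_1,\dots,\sigma_{k+1}}$ is contained in the \emph{interior} of $I_{\sigma_1,\dots,\sigma_k}$ at distance at least the gap created by $\phi^{k+1}_{\sigma_{k+1}}$ from the boundary. Concretely, the image $\phi^k_{\sigma_k}(I^{k+1}_{\sigma_{k+1}})$ leaves a margin of at least $\tfrac{1}{8^{k+2}}\cdot|I_{\sigma_1,\dots,\sigma_k}|$ (times an absolute constant) on each side within $I_{\sigma_1,\dots,\sigma_k}$, and this margin exceeds $\delta$ by the same exponential counting as above. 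Therefore $\dist(x, I_{\sigma_1,\dots,\sigma_N})<\delta$ forces $x\in I_{\sigma_1,\dots,\sigma_k}$.

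The main obstacle I anticipate is bookkeeping the constants cleanly: one needs the margin between $\arg\min = I_{\sigma_1,\dots,\sigma_N}$ and the boundary of $I_{\sigma_1,\dots,\sigma_k}$, not merely the length of $I_{\sigma_1,\dots,\sigma_k}$, and then verify the chosen $k=\lfloor\tfrac14\sqrt{\log(1/\delta)}\rfloor$ makes that margin strictly larger than $\delta$ for \emph{all} $\delta\le\tfrac14$ (the edge cases of moderately small $\delta$ being the tightest). I would handle this by bounding the margin below by $c\cdot 8^{-(k^2+3k)/2}$ for an explicit constant $c$, taking logarithms, and checking the resulting quadratic-in-$k$ inequality $\tfrac{(k^2+3k)\log 8}{2} - \log c < \log(1/\delta)$ directly from $k\le\tfrac14\sqrt{\log(1/\delta)}$, which gives $\tfrac{(k^2+3k)\log 8}{2}\le \tfrac{\log 8}{2}\big(\tfrac{\log(1/\delta)}{16}+\tfrac34\sqrt{\log(1/\delta)}\big)$, comfortably below $\log(1/\delta)$ for $\log(1/\delta)\ge \log 4$. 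The geometric facts themselves (nesting, containment in the interior, explicit scaling factors) all follow immediately from the definitions of $I^i_j$, $\phi^i_j$ and Lemma~\ref{lemma: I properties}.
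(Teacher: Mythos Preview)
Your proposal is correct and follows essentially the same approach as the paper: both arguments bound the margin between $I_{\sigma_1,\dots,\sigma_N}$ and the endpoints of $I_{\sigma_1,\dots,\sigma_k}$ using the product of the affine scaling factors $(\phi^i)'=8^{-(i+1)}$, then verify this margin exceeds $\delta$ for $k=\lfloor\tfrac14\sqrt{\log(1/\delta)}\rfloor$. The only cosmetic difference is that the paper writes the margin as a telescoping sum $\sum_{i=k}^{N-1}(\inf I_{\sigma_1,\dots,\sigma_{i+1}}-\inf I_{\sigma_1,\dots,\sigma_i})$ and bounds each term, whereas you go directly through the single-step containment $I_{\sigma_1,\dots,\sigma_N}\subset I_{\sigma_1,\dots,\sigma_{k+1}}$ and the margin of $I^{k+1}_{\sigma_{k+1}}$ inside $(0,1)$; since the paper's sum is dominated by its first term anyway, this is a harmless simplification. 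One small slip: the margin of $I^{k+1}_{\sigma_{k+1}}$ from the boundary of $(0,1)$ is $\tfrac12-\tfrac{2}{8^{k+2}}\ge\tfrac14$, not $\tfrac{1}{8^{k+2}}$, but your subsequent bound $c\cdot 8^{-(k^2+3k)/2}$ with $c$ an absolute constant is exactly right and the verification goes through as you describe.
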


\begin{proof}
Recall that $I_{\sigma_1,\dots,\sigma_{N}}\subset I_{\sigma_1,\dots,\sigma_{k}}$, and note that for any $i\in\NN:$
\begin{align*}
    \inf I_{\sigma_1,\dots,\sigma_{i+1}}-\inf I_{\sigma_1,\dots,\sigma_{i}}
    &=\phi^{1}_{\sigma_{1}}\circ\cdots\circ\phi^{i}_{\sigma_i}\circ\phi^{i+1}_{\sigma_{i+1}}(0)
    - \phi^{1}_{\sigma_{1}}\circ\cdots\circ\phi^{i}_{\sigma_{i}}(0)
    \\
    &\overset{(*)}{=}\left[\prod_{j=1}^{i}(\phi^j_{\sigma_j})'\right]\cdot\left(\phi^{i+1}_{\sigma_{i+1}}(0)-0\right)
    \\
    &\geq \left[\prod_{j=1}^{i}\frac{2}{8^{j+1}}\right]\cdot \left(\frac{1}{2}-\frac{2}{8^{i+2}}\right)
    \\
    &\geq \frac{2^{i}}{8^{\frac{i(i+3)}{2}}}\cdot \frac{1}{4}~.
\end{align*}
where $(*)$ follows from the fact that for any affine mapping $x\mapsto ax+b$ it holds that $(ax_1+b)-(ax_2+b)=a(x_1-x_2)$. Thus
\begin{align*}
    \inf I_{\sigma_1,\dots,\sigma_{N}}-\inf I_{\sigma_1,\dots,\sigma_{k}}
    &=\sum_{i=k}^{N-1}\left(\inf I_{\sigma_1,\dots,\sigma_{i+1}}-\inf I_{\sigma_1,\dots,\sigma_{i}}\right)
    \\
    &\geq \sum_{i=k}^{N-1}\frac{2^{i}}{8^{\frac{i(i+3)}{2}}}\cdot \frac{1}{4}
    \\
    &>\frac{2^{k}}{4\cdot 8^{\frac{k(k+3)}{2}}}
    >\delta
    ~,
\end{align*}
where that last inequality is a straightforward computation for $k=\lfloor\frac{1}{4}\sqrt{\log(1/\delta)}\rfloor$. We conclude that \[
\inf I_{\sigma_1,\dots,\sigma_{k}}<\inf I_{\sigma_1,\dots,\sigma_{N}}-\delta
~,
\]
while the same arguments also show that $\sup I_{\sigma_1,\dots,\sigma_{k}}>\sup I_{\sigma_1,\dots,\sigma_{N}}+\delta$. Overall, we see that for any $x$ satisfying $\dist(x,I_{\sigma_1,\dots,\sigma_N})<\delta:$
\[
\inf I_{\sigma_1,\dots,\sigma_k}<I_{\sigma_1,\dots,\sigma_N}-\delta<x<\sup I_{\sigma_1,\dots,\sigma_N}+\delta<\sup I_{\sigma_1,\dots,\sigma_k}
\implies x\in I_{\sigma_1,\dots,\sigma_k}
~.
\]

\end{proof}

We are now ready to finish the proof. We set $k=\lfloor\frac{1}{4}\sqrt{\log(1/\delta)}\rfloor,\,N=k+1,\,\sigma\sim\mathrm{Unif}\{0,1\}^N$. We consider the algorithm's iterates $x_1,\dots,x_T$ as random variables which depend on the random choice of the function $f_{\sigma}$ fed to the algorithm.
Denote the stochastic process \[
Z_{0}=0,~Z_t:=\max\{l\in\NN:\exists s\leq t,x_s\in I_{\sigma_1,\dots,\sigma_l}\}
~,
\]
and note that that $Z_{t+1}-Z_{t}\geq0$ with probability 1, yet by \lemref{lemma: prob jump into segment}: $\Pr[Z_{t+1}-Z_{t}=m]\leq\frac{1}{2^{m-1}}$. Hence by \lemref{lemma: delta distance segment}
\begin{align*}
&\Pr_{\sigma}\left[\exists t\in[T]:
\dist(x_{t},\arg\min_x f^N_\sigma(x))<\delta\right]
\leq\Pr_{\sigma}\left[\exists t\in[T]:
x_{t}\in I_{\sigma_1,\dots,\sigma_k}\right]
=\Pr_{\sigma}\left[Z_{T}\geq k\right]
\\
&\leq\frac{1}{k}\E[Z_T]
=\frac{1}{k}\sum_{j=1}^{T}\E[Z_{j}-Z_{j-1}]
=\frac{1}{k}\sum_{j=1}^{T}\sum_{m=0}^{\infty}\Pr[Z_{j}-Z_{j-1}=m]
\\
&\leq\frac{1}{k}\sum_{j=1}^{T}\sum_{m=0}^{\infty}\frac{1}{2^{m-1}}
\leq\frac{1}{k}\sum_{j=1}^{T}\sum_{m=0}^{\infty}\frac{1}{2^{m-1}}
\leq\frac{1}{k}\sum_{j=1}^{T}4
=\frac{4T}{k}
<\frac{2}{3}
~.
\end{align*}
This proves \eqref{eq: Yao hard distribution} for 2-Lipschitz function and $\frac{1}{2}$-stationary points. By rescaling the Lipschitz constant and $\epsilon$ by a factor of 2, we have finished the proof.

\subsection{Proof of \thmref{thm: nonconvex 1dim}}

The proof is identical to the proof of \thmref{thm: convex 1dim} which appears above, by replacing the constructed family $f_\sigma$ by the family $h_{\sigma}$ constructed in \citep[Proof of proposition 1]{kornowski2021oracle}. The only modification throughout the whole proof is that \lemref{lemma: delta distance segment} holds with $k=\Theta(\log(1/\delta))$ instead of $k=\frac{1}{4}\sqrt{\log(1/\delta)}$, affecting the final bound on $T$ appropriately.

\subsection{Proof of \thmref{thm:delta,epsilon upper bound}}

It is well known (see for example \citep{bubeck2015convex}) that given a Lipschitz convex function, performing projected gradient descent for $T_1:=\frac{L^2 R^{2/3}}{\epsilon^2\delta^{2/3}}$ iterations and setting $x_{\GD}:=\frac{1}{T_1}\sum_{j=1}^{T_1}x_j$
satisfies 
$f(x_{\GD})-\inf_x f(x)\leq\frac{RL}{\sqrt{T_1}}=R^{2/3}\epsilon\delta^{1/3}$.
Furthermore, according to \citep[Theorem 8]{zhang2020complexity}, if we initialize $\INGD$ at a point $x$ such that $f(x)-\inf_x f(x)\leq \Delta$ then with probability $\frac{2}{3}$ it produces a $(\delta,\epsilon)$ stationary point within $T_2=\widetilde{O}\left(\frac{\Delta L^2}{\delta\epsilon^3}\right)$ oracle calls. In particular, initializing at $x_{\GD}$ for which we have $\Delta=R^{2/3}\epsilon\delta^{1/3}$ yields $T_2=\frac{L^2 R^{2/3}}{\epsilon^2\delta^{2/3}}$
and letting $T=T_1+T_2$ proves the claim. Replacing $\INGD$ with its deterministic counterpart from \thmref{thm: deterministic upper} proves the additional claim.

\subsection{Proof of \claimref{claim: 1dim_equivalence}}

First, if $x$ is $\delta$-close to an $\epsilon$-stationary point it is clear that it is also $(\delta,\epsilon)$-stationary.

For the nontrivial implication, suppose $x$ is $(\delta,\epsilon)$-stationary. Namely, there exists $g\in\conv(\cup_{x-\delta<t<x+\delta}\partial f(t))$ with $|g|\leq\epsilon$. By Caratheodory's theorem (for $d=1$), this means there exist $t_1,t_2\in(-\delta,\delta),g_i\in\partial f(t_i)$ and $\lambda_1,\lambda_2\in[0,1]$ satisfying $\lambda_1+\lambda_2=1$ such that $g=\lambda_1 g_1+\lambda_2 g_2$. If either $g_1$ or $g_2$ satisfy $|g_i|\leq\epsilon$ then we get that $x$ is $\delta$-close the $\epsilon$-stationary point $t_i$. Otherwise, both $g_i$ satisfy $|g_i|>\epsilon$, but their convex combination is of magnitude smaller than $\epsilon$ - thus they necessarily have different signs. Without loss of generality suppose $g_1<0,g_2>0$. We claim that in this case there exists $t^*\in[t_1,t_2]$ such that $0\in\partial f(t^*)$, finishing the proof since $x$ is $\delta$-close to the stationary point $t^*$. Indeed, let $A:=\{t\in[t_1,t_2]:\forall g\in\partial f(t),g<0\}$ and note that by assumption $t_1\in A, t_2\not\in A$. Denote $t^*:=\sup A\in[t_1,t_2]$ which exists since $A$ is non-empty. On the one hand, by definition of the supremum there exists a non-decreasing sequence $\{y_n\}_{n=1}^{\infty}\subset A$ with $\lim_{n\to\infty}y_n=t^*$. Note that by definition of $A$ all the sub-differentials at all of $y_n$ are negative, thus $t^*$ has some non-positive sub-differential (by Bolzano–Weierstrass one can construct a converging sub-sequence of derivatives). On the other hand, by construction all differentiable points in $(t^*,t_2)$ have only positive differentials, thus $x+t^*$ has some non-negative sub-differential. Finally, recalling that $\partial f(t^*)$ is convex we get $0\in\partial f(t^*)$ as claimed.

\subsection*{Acknowledgements}

We thank Xiaotong Yuan for pointing out a bug in a previous version of the proof of Lemma 1.
This research is supported in part by European Research Council (ERC) grant 754705, and a grant from the Council for Higher Education Competitive Program for Data Science Research Centers.

\bibliography{bib}
\bibliographystyle{plainnat}

\appendix

\section{Appendix}

\begin{lemma} \label{lemma: delta,epsilon not delta close}
% Let $C>1$.
For any $C>1,\,\delta<1$ and any $\epsilon<\frac{1}{8C}$, there exists a 1-Lipschitz convex function $f:\reals^d\to\reals$ in dimension $d=O(1/\epsilon^2)$ and a point $x\in\reals^d$ such that $x$ is a $(\delta,\epsilon)$-stationary point of $f$, yet is at distance at least $C\delta$ away from any $\epsilon$-stationary point of $f$.
\end{lemma}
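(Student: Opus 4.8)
The plan is to use the simplest convex Lipschitz function exhibiting a ``many near-active facets'' phenomenon: take $f(y)=\max_{j\in[d]}y_j$ with $d=\ceil{1/\epsilon^2}$, and exhibit a point $x$ whose $d$ coordinates are split as evenly as possible between the two values $0$ and $\delta/2$. This $f$ is convex, its Euclidean Lipschitz constant is bounded by its $\ell_\infty$-Lipschitz constant which is $1$, and $d=O(1/\epsilon^2)$ as required. The balanced split is the one design choice that matters, and I will return to why.

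First I would pin down the $\epsilon$-stationary set of $f$. Writing $M(y):=\{j\in[d]:y_j=\max_k y_k\}$ for the set of active coordinates at $y$, the (Clarke = convex) subdifferential of a finite max of affine functions gives $\partial f(y)=\conv\{\e_j:j\in M(y)\}$ — exactly the structure already exploited in \lemref{lemma: delta,epsilon progress} — whose minimum-norm element is $\tfrac{1}{|M(y)|}\sum_{j\in M(y)}\e_j$, of norm $1/\sqrt{|M(y)|}$. Hence $y$ is $\epsilon$-stationary iff $|M(y)|\ge 1/\epsilon^2$; since $|M(y)|$ is an integer with $|M(y)|\le d=\ceil{1/\epsilon^2}$, this forces $|M(y)|=d$, i.e.\ all coordinates of $y$ equal. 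So the $\epsilon$-stationary set of $f$ is \emph{exactly} the diagonal $D:=\{t\1:t\in\reals\}$, and ``far from every $\epsilon$-stationary point'' means precisely ``far from $D$''.

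Next I would define the candidate point and verify the two properties. With $k:=\floor{d/2}$, set $x_j=\delta/2$ for $j\le k$ and $x_j=0$ for $j>k$. For $(\delta,\epsilon)$-stationarity: for each $j\in[d]$ let $z^{(j)}:=x+\tfrac{3\delta}{4}\e_j$, so $\norm{z^{(j)}-x}=\tfrac{3\delta}{4}<\delta$ lies in the \emph{open} ball $B_\delta(x)$, and the $j$-th coordinate of $z^{(j)}$ is at least $\tfrac{3\delta}{4}$ while every other coordinate is at most $\delta/2$; thus $f$ is differentiable at $z^{(j)}$ with $\nabla f(z^{(j)})=\e_j\in\partial_\delta f(x)$. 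Convexity of $\partial_\delta f(x)$ then yields $\tfrac1d\sum_{j=1}^d\e_j\in\partial_\delta f(x)$, of norm $1/\sqrt d\le\epsilon$. For the distance: a direct variance computation gives $\dist(x,D)^2=\tfrac{\delta^2}{4}\,d\,p(1-p)$ with $p=k/d\in[\tfrac12-\tfrac1{2d},\tfrac12]$, so $p(1-p)\ge\tfrac18$ and $\dist(x,D)^2\ge \tfrac{d\delta^2}{32}$. Since $\epsilon<\tfrac1{8C}$ gives $d\ge 1/\epsilon^2>64C^2>32C^2$, we conclude $\dist(x,D)>C\delta$, which is the claim.

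All computations here are elementary, so I expect no genuine obstacle beyond bookkeeping; the points needing care are (i) that the \emph{balanced} split is essential — perturbing a single coordinate would give $\dist(x,D)\approx\delta/2$, far too small, whereas the even split makes it grow like $\sqrt d\,\delta$, which is exactly what lets the hypothesis $\epsilon<1/(8C)$ do its job — and (ii) checking that the $z^{(j)}$ genuinely lie in the open ball and that the active-coordinate description of $\partial f$ correctly identifies the $\epsilon$-stationary set with $D$ (and nothing more), so that the distance bound really certifies distance from \emph{every} $\epsilon$-stationary point. Note this construction gives arbitrarily large multiples $C$ of $\delta$, strengthening the $2\delta$-type statement of \citep{tian2022finite}.
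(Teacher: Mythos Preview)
Your argument is correct. The function $f(y)=\max_j y_j$ is $1$-Lipschitz, convex, with subdifferential $\conv\{\e_j:j\in M(y)\}$, so the $\epsilon$-stationary set is exactly the diagonal $D$ once $d=\ceil{1/\epsilon^2}$; your points $z^{(j)}$ witness $\e_j\in\partial_\delta f(x)$ for every $j$, and the variance computation together with $d>64C^2$ gives $\dist(x,D)>C\delta$. All steps check out, including the bookkeeping on $p(1-p)\ge 1/8$ (which only needs $d\ge 2$, and you have $d>64$).

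The paper takes a genuinely different route: it uses the Mahalanobis-type norm $f(x)=\sqrt{x^\top A x}$ with $A=\mathrm{diag}(2\epsilon^2,1,\dots,1)$, whose only $\epsilon$-stationary point is the origin, and exhibits the single point $x_0=\tfrac{\delta}{8\epsilon}\e_1$; the $(\delta,\epsilon)$-stationarity is certified by averaging the gradients at $x_0+\tfrac{\delta}{2}\e_j$ for $j\ge 2$. Your construction is more elementary and, pleasantly, reuses the same max-of-coordinates structure already central to the paper's lower bound (\lemref{lemma: delta,epsilon progress}), making the mechanism transparent: many near-active facets in a $\delta$-ball, yet the full tie needed for true $\epsilon$-stationarity is $\Theta(\sqrt{d}\,\delta)$ away. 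The paper's construction, on the other hand, is smooth away from a single point and has an isolated $\epsilon$-stationary set (a point rather than a line), which is a slightly sharper illustration that the phenomenon is not an artifact of nondifferentiability or of an unbounded stationary set.
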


\begin{proof}
Fix $\delta<1<C,~\epsilon<\frac{1}{8C}$, and let $d=\lceil\frac{3}{\epsilon^2}\rceil$. We denote by $A:=\mathrm{diag}(2\epsilon^2,1,\dots,1)\in\reals^{d\times d}$ the diagonal positive-definite matrix whose diagonal is $(2\epsilon^2,1,\dots,1)$, and consider its corresponding Mahalanobis norm $f(x):=\sqrt{x^\top Ax}$. For any $x\neq 0$ we have
\[
\nabla f(x)=\frac{1}{\sqrt{x^\top Ax}}\cdot Ax
~\implies~
\norm{\nabla f(x)}^2=\frac{x^\top A^2 x}{x^\top A x}
\in[\lambda_{\min}(A),\lambda_{\max}(A)]=[2\epsilon^2,1]~.
\]
In particular, we see that for any $x\neq0$ the gradient norm is at least $\sqrt{2\epsilon^2}=\sqrt{2}\cdot\epsilon$, thus it is not an $\epsilon$-stationary point.
We deduce that $f$ is indeed 1-Lipschitz, and that it has no $\epsilon$-stationary points except for the origin (which is its minimum, hence stationary). Denote $x_{0}:=\frac{\delta}{8\epsilon}\mathrm{e}_1$ and notice that its distance from the only $\epsilon$-stationary point of $f$ is $\norm{x_0-0}=\frac{\delta}{8\epsilon}>C\delta$. It remains to show that $x_0$ is a $(\delta,\epsilon)$-stationary point of $f$. To that end, consider for any $2\leq j\leq d:~z_j:=x_0+\frac{\delta}{2}\cdot\mathrm{e}_j=\frac{\delta}{8\epsilon}\mathrm{e}_1+\frac{\delta}{2}\cdot\mathrm{e}_j$ which are clearly inside a $\delta$-ball around $x_0$. Furthermore,
\[
\nabla f(z_j)
=\frac{1}{\sqrt{\frac{\delta^2}{32}+\frac{\delta^2}{4}}}\cdot\left(\frac{\delta\epsilon}{4}\cdot\mathrm{e}_1+\frac{\delta}{2}\cdot\mathrm{e}_j\right)
=\frac{\sqrt{2}}{3}\cdot\epsilon\cdot\mathrm{e}_1
+\frac{2\sqrt{2}}{3}\cdot\mathrm{e}_j~.
\]

Now, define $g:=\frac{1}{d-1}\sum_{j=2}^{d}\nabla f(z_j)$ and notice that by definition $g\in\partial f_\delta(x_0)$ since $\norm{x-z_j}<\delta$. Furthermore,
\begin{align*}
g&=\frac{1}{d-1}\sum_{j=2}^{d}\nabla f(z_j)
=\frac{\sqrt{2}}{3}\epsilon \cdot \mathrm{e}_1 + \frac{2\sqrt{2}}{3(d-1)}\sum_{j=2}^{d}\mathrm{e}_j
\\
\implies
\norm{g}&=\sqrt{\frac{2\epsilon^2}{9}+\frac{8}{9(d-1)^2}\cdot(d-1)}
=
\sqrt{\frac{2\epsilon^2}{9}+\frac{8}{9(d-1)}}<\epsilon~,
\end{align*}
where the last inequality is easily verified for $d=\lceil\frac{3}{\epsilon^2}\rceil$. This establishes that $x$ is indeed a $(\delta,\epsilon)$-stationary point.

\end{proof}

\end{document}